\documentclass[11pt,twoside]{article} 

\usepackage{epsfig}
\usepackage[usenames]{color}
\usepackage{amsmath}
\usepackage{amssymb}
\usepackage{mathrsfs}
\usepackage{amsfonts}
\usepackage{amsthm}
\usepackage{lscape}
\usepackage{graphicx}
\usepackage{hyperref}
\hypersetup{colorlinks=false,pdftitle="Reverse entropy power inequalities",pdftex}


\setlength{\topmargin}{-0.45 in}     
\setlength{\oddsidemargin}{0.3in}  
\setlength{\evensidemargin}{0.3in} 
\setlength{\textheight}{9in}
\setlength{\textwidth}{6.1in} 
\setlength{\footskip}{0.55in}  

\theoremstyle{plain} 
\newtheorem{theorem}{Theorem}[section]
\newtheorem{proposition}[theorem]{Proposition}
\newtheorem{corollary}[theorem]{Corollary}
\newtheorem{lemma}[theorem]{Lemma}

\theoremstyle{remark} 
\newtheorem{remark}[theorem]{Remark}

\newcommand{\E}{{\bf E}}
\newcommand{\R}{\mathbb{R}}

\renewcommand{\P}{{\bf P}}

\renewcommand{\k}{{\kappa}}

\newcommand{\collS}{{\cal C}}
\newcommand{\nth}{\frac{1}{n}}
\newcommand{\setS}{s}

\newcommand{\be}{\begin{equation}}
\newcommand{\en}{\end{equation}}
\newcommand{\bee}{\begin{eqnarray*}}
\newcommand{\ene}{\end{eqnarray*}}
\def\ben{\begin{eqnarray*}}
\def\een{\end{eqnarray*}}

\numberwithin{equation}{section}

\begin{document}

\title{Reverse Brunn-Minkowski and reverse entropy power inequalities for  convex measures}
\author{Sergey Bobkov\thanks{S. Bobkov is with the
              School of Mathematics, University of Minnesota, 127 Vincent Hall, 
              206 Church St. S.E., Minneapolis, MN 55455 USA.
              Email: {\tt bobkov@math.umn.edu}. S.B. was supported in part by U.S. National Science Foundation grant DMS-1106530.}
 and Mokshay Madiman\thanks{
M. Madiman is with the
          Department of Statistics, Yale University, James Dwight Dana House,
          24 Hillhouse Ave, New Haven, CT 06511 USA.
          Email: {\tt mokshay.madiman@yale.edu}. M.M. was supported by a Junior Faculty Fellowship from Yale University and the
U.S. National Science Foundation CAREER grant DMS-1056996.
} 
}
\date{}
\maketitle

\begin{abstract}
We develop a reverse entropy power inequality for convex measures, 
which may be seen as an affine-geometric inverse of the entropy power inequality of Shannon and Stam. 
The specialization of this inequality to log-concave measures may be seen as a version of Milman's 
reverse Brunn-Minkowski inequality. The proof relies on a demonstration of new relationships 
between the entropy of high dimensional random vectors and the volume of convex bodies, 
and on a study of effective supports of convex measures, both of which are of independent interest, 
as well as on Milman's deep technology of $M$-ellipsoids and on certain information-theoretic inequalities. 
As a by-product, we also give a continuous analogue of some Pl\"unnecke-Ruzsa inequalities from
additive combinatorics.
\end{abstract}

\section{Introduction}
\label{sec:intro}
\setcounter{equation}{0}

The reverse Brunn-Minkowski inequality is a deep result in Convex Geometry 
discovered by V.~D.~Milman in the mid 1980s (cf. \cite{Mil86,Mil88:2,Mil88:1,Pis89:book}).
It states that, given two convex bodies $A$ and $B$ in $\R^n$, 
one can find linear volume preserving maps $u_i:\R^n \rightarrow \R^n$ 
$(i=1,2)$ such that with some absolute constant $C$
\be\label{eq:reverseBM}
\big|\widetilde A + \widetilde B\big|^{1/n} \leq C 
\left(|A|^{1/n} + |B|^{1/n}\right),
\en
where $\widetilde A = u_1(A)$, $\widetilde B = u_2(B)$, 
$\widetilde A + \widetilde B = 
\big\{x+y: x \in \widetilde A, \ y \in \widetilde B\big\}$
is the Minkowski sum, and where $|A|$ stands for the $n$-dimensional volume.
(Of course, one of these maps may be taken to be the identity operator.)
A similar inequality continues to hold for finitely many convex bodies with 
constants depending on the number of sets involved.

Note that the reverse inequality to \eqref{eq:reverseBM},
\be\label{eq:BM}
\big|\widetilde A + \widetilde B\big|^{1/n} \geq |A|^{1/n} + |B|^{1/n},
\en
holds true for any such $u_i$ by the usual Brunn-Minkowski inequality. 
Without loss of generality both relations may be written for convex
bodies with volume one, when \eqref{eq:reverseBM}--\eqref{eq:BM} take a simpler form
\be\label{eq:BM-norm}
2 \leq |A + \widetilde B|^{1/n} \leq 2C.
\en

Milman's inverse Brunn-Minkowski inequality has connections with 
high dimensional phenomena in Convex Geometry. For instance, it 
is known that proving Milman's inequality for convex bodies in isotropic 
position is equivalent to the hyperplane conjecture (\cite{BKM04}). 
It has also found a number of interesting extensions and applications 
(cf. \cite{KT05}, \cite{KM05}, \cite{AMO08}).

Our primary goal in this note is to develop an entropic generalization of 
the reverse Brunn-Minkowski inequality \eqref{eq:reverseBM}, 
which would involve arbitrary log-concave probability distributions 
rather than just uniform measures on compact convex sets. More generally, 
we consider convex (also called hyperbolic) measures, i.e., having 
densities of the form
\be\label{eq:cvx-def}
f(x) = V(x)^{-\beta}, \qquad x \in \R^n,
\en
where $V$ are positive convex functions on $\R^n$ and $\beta>n$ is
a given parameter. (To be precise, these are the densities
of the so-called $\k$-concave measures for $\k=(n-\beta)^{-1}$;
see Section~\ref{sec:cvx} for details.)
A secondary goal of this note is to 
develop a technology for going from entropy estimates to volume
estimates in convex geometry; this is developed in Section~\ref{sec:ent-vol},
and underlies the claim that our main result, stated purely in terms of
entropies, is a generalization of Milman's inverse Brunn-Minkowski inequality.

The afore-mentioned entropic generalization may be stated as an inverse of 
the entropy power inequality, in the same sense that Milman's inequality 
is an inverse of the Brunn-Minkowski inequality.
Given a random vector $X$ in $\R^n$ with density $f(x)$, introduce the 
entropy functional (or the differential entropy, or the Boltzmann--Shannon entropy),
$$
h(X) = - \int_{\R^n} f(x) \log f(x)\,dx,
$$
together with the entropy power
$$
H(X) = e^{2h(X)/n},
$$
provided that the integral exists in the Lebesgue sense. In particular, 
if $X$ is uniformly distributed in a convex body $A \subset \R^n$, we have
$$
h(X) = \log |A|, \qquad H(X) =|A|^{2/n}.
$$
These identities themselves suggest reviewing a number of results on  
volume relations in terms of the entropy, and also inspire one to find 
analogues of such relations for different classes of multidimensional 
probability distributions in the language of information theory. 

The entropy power inequality, due to Shannon and Stam 
(\cite{Sha48}, \cite{Sta59}, cf. also \cite{Cos85b}, \cite{Dem89}, \cite{Vil00} for a refinement when
one of the random vectors is normal, and \cite{ABBN04:1}, \cite{MB07} for other refinements), 
asserts that
\be\label{eq:EPI}
H(X+Y) \geq H(X) + H(Y),
\en
for any two independent random vectors $X$ and $Y$ in $\R^n$, for which
the entropy is defined. Although it is not directly equivalent to the 
Brunn-Minkowski inequality, it is very similar to it \cite{CC84}. 
For example, being restricted to normal random vectors
$X,Y$ with covariance matrices $R,S$, the inequality \eqref{eq:EPI} becomes 
Minkowski's inequality for determinants of positive definite matrices,
$$
{\rm det}^{1/n}(R+S) \geq {\rm det}^{1/n}(R) + {\rm det}^{1/n}(S).
$$
It includes the Brunn-Minkowski inequality for parallepipeds and therefore 
extends, by a simple bisection argument of Hadwiger-Ohmann (or in view of 
the infinitesimal character of the Brunn-Minkowski inequality),
to the class of all Borel measurable subsets of the Euclidean space.
Conversely, one may deduce the entropy power inequality as a consequence
of a Brunn-Minkowski inequality for restricted sums of sets \cite{SV96:epi, SV00}. 
Moreover, both the Brunn-Minkowski
and the entropy power inequalities can be given similar proofs as limiting cases of
Young's inequality for convolution with sharp constant \cite{DCT91}. 

In order to judge the sharpness of the entropy power inequality \eqref{eq:EPI}, we need to keep in mind
that the entropy is invariant under linear volume preserving transformation
of the space, i.e., $H(u(X)) = H(X)$ whenever $|{\rm det}(u)|=1$.
On the other hand, the left side of \eqref{eq:EPI} essentially depends
on ``positions'' of the distributions of $X$ and $Y$, in the sense that it
is sensitive to linear volume preserving transformation of either $X$ or $Y$. 
Therefore, to reverse this inequality, some transformation of these random vectors is needed.
Specifically, we have:

\begin{theorem}\label{thm:repi}
Fix $\beta_0 > 2$. Let $X$ and $Y$ be independent random vectors 
in $\R^n$ with densities of the form $\eqref{eq:cvx-def}$ with $\beta \geq \max\{\beta_0 n, 2n+1\}$. 
There exist linear volume preserving 
maps $u_i:\R^n \rightarrow \R^n$ such that 
\be\label{eq:repi}
H\big(\widetilde X + \widetilde Y\big)\, \leq\, C_{\beta_0}\, (H(X) + H(Y)),
\en
where $\widetilde X = u_1(X)$, $\widetilde Y = u_2(Y)$, and where
$C_{\beta_0}$ is a constant depending only on $\beta_0$.
\end{theorem}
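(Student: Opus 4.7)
The plan is to reduce \eqref{eq:repi} to Milman's reverse Brunn-Minkowski inequality \eqref{eq:reverseBM} by passing to \emph{effective supports}: to each convex measure with density $V^{-\beta}$ I would associate a convex body whose volume is equivalent, up to constants depending only on $\beta_0$, to the entropy power of the measure. Milman's inequality, applied to these convex bodies, should then translate directly into the desired reverse entropy power inequality for $X$ and $Y$.

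First I would construct, for each $Z$ satisfying the hypothesis, a convex body $K_Z$ --- naturally a sub-level set of the convex function $V_Z$ defining the density, for a suitable threshold --- enjoying two properties with constants depending only on $\beta_0$: (i)~$c_1(\beta_0)\, H(Z) \leq |K_Z|^{2/n} \leq c_2(\beta_0)\, H(Z)$; (ii)~$\P(Z \in K_Z) \geq c_3(\beta_0) > 0$. The equivalence (i) is exactly the entropy-to-volume translation announced in Section~\ref{sec:ent-vol}, and (ii) captures the sense in which $K_Z$ is an \emph{effective} support. The condition $\beta \geq \beta_0 n$ enters essentially here, since the equivalence between volume and entropy power degenerates as $\k = 1/(n-\beta)$ approaches $0$ from below.

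Next, I would apply Milman's reverse Brunn-Minkowski inequality to the convex bodies $K_X$ and $K_Y$, producing linear volume-preserving maps $u_1, u_2$ such that
\[
|u_1(K_X) + u_2(K_Y)|^{1/n} \leq C\big(|K_X|^{1/n} + |K_Y|^{1/n}\big).
\]
Setting $\widetilde X = u_1(X)$ and $\widetilde Y = u_2(Y)$, entropies are preserved, the densities of $\widetilde X, \widetilde Y$ remain of the form $V^{-\beta}$ with the same exponents, and $u_1(K_X), u_2(K_Y)$ serve as effective supports of $\widetilde X, \widetilde Y$. By Borell's theorem, $\widetilde X + \widetilde Y$ is again a convex measure with exponent $\beta_X + \beta_Y - n \geq (2\beta_0 - 1)n > \beta_0 n$, so the construction of Step~1 applies also to it.

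The crux is to show that the effective support of $\widetilde X + \widetilde Y$ is contained, up to a dilation by an absolute constant, in the Minkowski sum $u_1(K_X) + u_2(K_Y)$. Combined with the entropy-volume equivalence applied to $\widetilde X + \widetilde Y$ and with Milman's bound, this yields
\[
H(\widetilde X + \widetilde Y) \leq C'(\beta_0)\, |u_1(K_X) + u_2(K_Y)|^{2/n} \leq 2C'(\beta_0)\,C^2\big(|K_X|^{2/n} + |K_Y|^{2/n}\big) \leq C_{\beta_0}\big(H(X)+H(Y)\big),
\]
using $(a+b)^2 \leq 2(a^2+b^2)$ in the middle step. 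This inclusion is the main obstacle: because convex densities have polynomial tails, $\widetilde X + \widetilde Y$ is not literally supported on $u_1(K_X) + u_2(K_Y)$, so one must establish quantitative concentration bounds on the convolution showing that most of its mass lies in a bounded dilate of this Minkowski sum. This is exactly where the second hypothesis $\beta \geq 2n+1$ is used, ensuring finite variances of the marginals and hence the requisite tail control, in the spirit of the continuous Pl\"unnecke-Ruzsa estimates advertised in the abstract.
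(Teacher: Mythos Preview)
Your plan has a genuine gap at the step where you invoke ``Borell's theorem'' to conclude that $\widetilde X + \widetilde Y$ is again a convex measure with exponent $\beta_X + \beta_Y - n$. Borell's convolution result (Proposition~\ref{prop:cvx-conv}) requires the convexity parameters to satisfy $\k' + \k'' > 0$. Here both $\widetilde X$ and $\widetilde Y$ have \emph{negative} parameters $\k' = -1/(\beta_X - n)$ and $\k'' = -1/(\beta_Y - n)$, so the hypothesis fails and nothing guarantees that the convolution is $\k$-concave for any useful $\k$. Consequently you cannot apply the entropy--volume equivalence of Step~1 to $\widetilde X + \widetilde Y$, and the entire chain of inequalities in your final display breaks down. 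The paper is explicit about this obstruction at the start of Section~\ref{sec:genpf}: the stability of convexity under convolution, available in the log-concave case, ``has no extension to the class of convex measures (with negative convexity parameter $\k$)''.

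The paper's remedy is precisely to avoid convolving $X$ and $Y$ with each other. Instead it introduces an auxiliary random vector $Z$ uniform on the Euclidean ball $D$ of volume one (so $h(Z)=0$ and $\k_Z = 1/n > 0$) and uses the submodularity inequality $h(X+Y) \leq h(X+Z) + h(Y+Z)$. Now each term on the right involves a convolution of a $\k'$-concave measure with a $(1/n)$-concave one, and the condition $\k' + 1/n > 0$ is exactly $\beta > 2n$; this is the true origin of the hypothesis $\beta \geq 2n+1$, not finiteness of variances as you suggest. After putting the distribution of $X$ in $M$-position (Proposition~\ref{prop:M-cvx}), the density of $u(X)+Z$ has maximum bounded below by $c_0^n$, and Proposition~\ref{prop:cvx-maxnorm} applied to this (now legitimately convex) convolution yields the required upper bound on $H(u(X)+Z)$. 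Your effective-support idea is morally the right ingredient for producing the $M$-position, but it cannot by itself handle the sum $X+Y$; the submodularity detour through $Z$ is the missing idea.
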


For growing $\beta$, the families \eqref{eq:cvx-def} shrink, and we arrive in the limit 
as $\beta \rightarrow +\infty$ at the class of log-concave densities 
(which correspond to the class of log-concave measures). 
Recall that log-concavity of a non-negative function 
$f$ on $\R^n$ is also defined through the inequality
$$
f(tx+sy) \geq f(x)^t g(y)^s, \qquad x,y \in \R^n, \ \ t,s > 0, \ t+s=1.
$$
Such functions are supported and positive on some open convex sets
in $\R^n$, where $\log f$ are concave (and we define them to be zero
outside supporting sets).

Thus, by Theorem~\ref{thm:repi}, if $X$ and $Y$ are independent and have log-concave 
densities, then for some linear volume preserving maps 
$u_i:\R^n \rightarrow \R^n$,
\be\label{eq:repi-lc}
H\big(\widetilde X + \widetilde Y\big)\, \leq\, C\, (H(X) + H(Y)),
\en
where $C$ is an absolute constant. This statement for the log-concave 
case was announced by the authors in \cite{BM11:cras}.

As for the general case, it can be shown that there does not exist a finite universal constant 
such that a reverse entropy power inequality holds for the entire class of convex 
measures, so that some restriction on the range of convexity parameter $\beta$
as in Theorem~\ref{thm:repi} is necessary  (see Proposition~\ref{prop:counter}).
Nevertheless, it would be interesting to explore how the constants
in the inequality \eqref{eq:repi} may depend on the remaining values $\beta > n$.


Let us state an equivalent variant of Theorem~\ref{thm:repi} by involving maximum 
of the density,
$$
\|f\| = {\rm ess\, sup}_x\, f(x),
$$
and keeping the same notations.

\begin{theorem}\label{thm:repi2}
Fix $\beta_0 > 2$. Let $X$ and $Y$ be independent random vectors 
in $\R^n$ with densities $f$ and $g$ of the form $\eqref{eq:cvx-def}$, such that 
$\|f\| = \|g\| = 1$. If $\beta \geq \max\{\beta_0 n, 2n+1\}$, there exist 
linear volume preserving maps $u_i:\R^n \rightarrow \R^n$ such that 
\be\label{eq:repi-norm}
c_0 n\, \leq\, h\big(\widetilde X + \widetilde Y\big)\, \leq\, c_{\beta_0} n
\en
with some absolute constant $c_0>0$, and some
constant $c_{\beta_0}$ depending only on $\beta_0$.
\end{theorem}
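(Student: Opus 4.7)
The plan is to deduce Theorem~\ref{thm:repi2} from Theorem~\ref{thm:repi} together with an auxiliary entropy-vs-supremum estimate for normalized convex densities. The central observation is that once $\|f\|=\|g\|=1$ is imposed, both marginal entropies $h(X)$ and $h(Y)$ are pinned down to order $n$; Theorem~\ref{thm:repi} then immediately produces the upper bound in \eqref{eq:repi-norm}, while the lower bound falls out of the classical entropy power inequality.

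I would first handle the lower bound $h(\widetilde X+\widetilde Y)\geq c_0n$. Since $u_1$ is linear and volume preserving, the density of $\widetilde X=u_1(X)$ is $\widetilde f(x)=f(u_1^{-1}x)$, so $\|\widetilde f\|_\infty=\|f\|_\infty=1$, and hence
\[
h(\widetilde X)\;=\;-\int\widetilde f\log\widetilde f\;\geq\;-\log\|\widetilde f\|_\infty\;=\;0,
\]
with the analogous bound for $\widetilde Y$. The Shannon--Stam inequality \eqref{eq:EPI} then yields
\[
H(\widetilde X+\widetilde Y)\;\geq\;H(\widetilde X)+H(\widetilde Y)\;=\;e^{2h(\widetilde X)/n}+e^{2h(\widetilde Y)/n}\;\geq\;2,
\]
so $h(\widetilde X+\widetilde Y)\geq\tfrac{1}{2}n\log 2$, and we may take $c_0=\tfrac{1}{2}\log 2$. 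Note that this step uses only $\|f\|_\infty,\|g\|_\infty\leq 1$, not the convex-measure hypothesis at all.

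For the upper bound I would invoke Theorem~\ref{thm:repi} to produce linear volume preserving $u_1,u_2$ satisfying $H(\widetilde X+\widetilde Y)\leq C_{\beta_0}(H(X)+H(Y))$, and then reduce the claim to the auxiliary statement
\[
h(X)\leq c'_{\beta_0}\,n\quad\text{whenever } f=V^{-\beta} \text{ with } \beta\geq\beta_0 n \text{ and } \|f\|_\infty=1,
\]
together with its analogue for $Y$. The normalization $\|f\|_\infty=1$ forces $V\geq 1$ pointwise, so $h(X)=\beta\,\E\log V(X)\geq 0$. For the upper estimate, the convex level sets $K_t=\{f\geq e^{-t}\}$ satisfy the Markov volume bound $|K_t|\leq e^t$, and the layer-cake representation
\[
h(X)=\int_0^\infty\P\bigl(f(X)<e^{-t}\bigr)\,dt
\]
can be controlled by splitting at $t\sim n$, using $|K_t|\leq e^t$ on the lower range and the polynomial tail decay of $V^{-\beta}$ (guaranteed by $\beta\geq 2n+1$) on the upper range. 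The assumption $\beta/n\geq\beta_0$ is what makes the resulting constant uniform. Granting this, $H(X),H(Y)\leq e^{2c'_{\beta_0}}$, and we conclude $h(\widetilde X+\widetilde Y)\leq\tfrac{n}{2}\log\!\bigl(2C_{\beta_0}e^{2c'_{\beta_0}}\bigr)=c_{\beta_0}n$.

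The main obstacle is the auxiliary bound $h(X)\leq c'_{\beta_0}n$: for log-concave $f$ (the limit $\beta\to\infty$) it is a classical consequence of Pr\'ekopa--Leindler giving $h(f)\leq -\log\|f\|_\infty+n$, but for finite $\beta$ the heavy polynomial tails $V^{-\beta}$ make the bound genuinely sensitive to the ratio $\beta/n$, and the constraint $\beta\geq\max\{\beta_0n,2n+1\}$ is essential. The example implicit in Proposition~\ref{prop:counter} shows that no such uniform constant can survive for all $\beta>n$, so the argument must actually use $\beta_0>2$. I would expect this estimate to fall out of the effective-support analysis of convex measures together with the entropy-vs-volume machinery developed in Section~\ref{sec:ent-vol}.
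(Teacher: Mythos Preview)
Your proposal is correct and matches the paper's approach exactly: the lower bound comes from the entropy power inequality (the paper obtains the same constant $c_0=\log\sqrt{2}$), and the upper bound comes from Theorem~\ref{thm:repi} combined with the auxiliary estimate $h(X)\leq c'_{\beta_0}n$ for densities of the form \eqref{eq:cvx-def} normalized so that $\|f\|=1$. The auxiliary estimate you flag as the main obstacle is precisely Proposition~\ref{prop:cvx-maxnorm} in Section~\ref{sec:ent-max} (quoted from \cite{BM11:it}), so your layer-cake sketch can simply be replaced by a direct citation.
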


Equivalently, with some $C_{\beta_0} > C_0 > 1$, we have
\be\label{eq:repi-norm2}
C_0 \leq H\big(\widetilde X + \widetilde Y\big) \leq C_{\beta_0}.
\en

Being restricted to random vectors $X$ and $Y$ that are uniformly 
distributed in convex bodies $A$ and $B$, the reverse entropy power 
inequality \eqref{eq:repi} is equivalent to Milman's theorem \eqref{eq:reverseBM} modulo 
an absolute factor, while the right inequality in \eqref{eq:repi-norm2} is
equivalent to the right inequality in \eqref{eq:BM-norm} in a similar sense
(under the assumption $|A|=|B|=1$).

This generalization is however not immediate and has to be clarified, 
because the distribution of $X+Y$ is not uniform in $A+B$. Nevertheless, 
it is ``almost'' uniform, so that $H(X+Y)$ is of the same order as 
$|A+B|^{2/n}$. As will be explained later on, if $X$ and $Y$ are independent 
and uniformly distributed in $A$ and $B$, we have
\be\label{eq:volsum}
\frac{1}{4}\
|A + B|^{2/n} \leq\, H(X+Y) \leq\ |A + B|^{2/n}.
\en
These bounds allow one to freely translate many volume 
relations into statements about entropy.

As for the left inequality in \eqref{eq:repi-norm}, it immediately follows from the 
entropy power inequality \eqref{eq:EPI}, which implies ``concavity''
of the entropy functional: 
$$
h\bigg(\frac{\widetilde X + \widetilde Y}{\sqrt{2}}\bigg) \geq 
\frac{h(\widetilde X) + h(\widetilde Y)}{2} = 
\frac{h(X) + h(Y)}{2} \geq 0,
$$
where on the last step the assumption $f,g \leq 1$ is used. Hence, one may
take $c_0 = \log \sqrt{2}$ in \eqref{eq:repi-norm} and $C_0 = 2$ in \eqref{eq:repi-norm2},
similarly to the left inequality in \eqref{eq:BM-norm}.

\vskip2mm
It should be noted that there are other (non-entropic) formulations of 
the reverse Brunn-Minkowski inequality. In their study of the geometry 
of log-concave functions B.~Klartag and V. D. Milman have recently proposed
a natural functional generalization of \eqref{eq:reverseBM} in terms of the Asplund product
$$
f \star g(x) = \sup_y \big[f(x-y) g(y)\big], \qquad x \in \R^n.
$$
They prove (cf. \cite[Theorem 1.3]{KM05}) that, given symmetric log-concave functions 
$f$ and $g$ on $\R^n$, satisfying $f(0)=g(0)=1$, there
exist linear volume preserving maps $u_i:\R^n \rightarrow \R^n$ such that
with some absolute constant $C$, 
\be\label{eq:km}
\bigg(\int \tilde f \star \tilde g(x)\ dx\bigg)^{\!1/n} \leq\, C\,
\bigg[\bigg(\int f(x)\,dx\bigg)^{\!1/n} + \bigg(\int g(x)\,dx\bigg)^{\!1/n}\,\bigg],
\en
where $\widetilde f(x) = f((u_1(x))$ and $\widetilde g(x) = g(u_2(x))$.
Indeed, on the indicator functions $f = 1_A$, $g = 1_B$, we have
$
\tilde f \star \tilde g = 1_{\widetilde A + \widetilde B}, 
$
so \eqref{eq:km} reduces exactly to \eqref{eq:reverseBM}.

The inequality \eqref{eq:km} is related to the log-concave variant \eqref{eq:repi-lc}
in Theorem~\ref{thm:repi}. However, the Asplund product behaves differently than
the usual convolution, especially for  densities that are not log-concave.
Anyhow, in the proof of Theorems 1.1--1.2 themselves, the convex body case
as in \eqref{eq:reverseBM} or \eqref{eq:BM-norm}, that is, Milman's theorem, will be a basic 
ingredient in our argument, together with a general ``submodularity''  property of the
entropy functional (cf. \cite{Mad08:itw}), which has recently appeared in information theory.

\vskip2mm
The paper is organized as follows.
In Section~\ref{sec:cvx} we recall Borell's hierarchy and characterization of convex
measures and discuss convexity properties of convolutions, which are prerequisites
for the rest of the paper.

Section~\ref{sec:ent-vol} introduces a new tool for going from 
entropy estimates to volume estimates in convex geometry. 
The key idea here is that for sufficiently ``convex'' probability measures
(i.e., $\k$-concave probability measures for positive $\k$, which 
necessarily have compact support), the entropy can be approximated
in some sense by the logarithm of the volume of the support set.
While the fact that the entropy of a probability measure on a compact set is
bounded from above by  the logarithm of the volume of the support is simple
and classical, the corresponding lower bound under convexity assumptions
is new. In Section~\ref{sec:ent-max}, the entropy of convex measures is related 
to the maximum of their densities (which is of course related to the volume of the support
in the special case of the uniform distribution on a set), and some corollaries are discussed. 

The case of negative $\k$ is considered in Section~\ref{sec:supp}. 
In this case, although the support set of a $\k$-concave probability measure
may not be bounded, it is nonetheless possible to define in some sense
an ``effective support'', which is bounded and whose volume is related to
the entropy of the measure. In this sense, the relation between entropy and
volume can be extended to general convex measures, and moreover,
this may be thought of as providing a reverse technology to go from 
volume estimates to entropy estimates in convex geometry by using the notion
of effective supports. Some refinements of these ideas, related to an
asymptotic equipartition property for log-concave measures, are
described in \cite{BM11:aop}. 

Next, in Section~\ref{sec:m-pos}, we turn to the notion of $M$-positions 
of convex bodies, first developed by V. Milman, and show using
the afore-mentioned effective support idea that such a notion can be
defined for convex measures.
Section~\ref{sec:submod} introduces into convex geometry
a submodularity result for the entropy of sums,
first developed in \cite{Mad08:itw}, and discusses some corollaries,
including the connection of $M$-positions of convex bodies with 
the reverse Brunn-Minkowski inequality, and continuous analogues
for volumes of convex bodies of the Pl\"unnecke-Ruzsa inequalities
that are well known in the discrete world of additive combinatorics.

Section~\ref{sec:lcpf} and ~\ref{sec:genpf} are devoted to completing
the proof of Theorem~\ref{thm:repi}-- the former for the log-concave case,
and the latter for the general convex measure case.
Finally, in Section~\ref{sec:disc}, we comment on the reverse entropy power
inequality \eqref{eq:repi-lc} for log-concave measures
in the case where the distributions of $X$ and $Y$ are isotropic.

\vspace{.1in}
\noindent{\bf Acknowledgments.}
We are grateful to an anonymous referee for several useful suggestions to improve
clarity of the paper, and to both him/her and K.~Ball for fleshing out our understanding 
of the history of Corollary~\ref{cor:lc-maxnorm} (discussed in Section~\ref{sec:ent-max}).



\section{Convex measures}
\label{sec:cvx}
\setcounter{equation}{0}

Here we recall basic definitions and the characterization of the so-called 
convex measures.

Given $-\infty \leq \k \leq 1$, a probability measure $\mu$ on $\R^n$ is 
called $\k$-concave, if it satisfies the Brunn-Minkowski-type inequality
\be\label{eq:kconc-def}
\mu \big (tA + (1-t)B \big ) \geq
\big [\,t\mu(A)^\k + (1-t)\mu(B)^\k\big ]^{1/\k}
\en
for all $t \in (0,1)$ and for all Borel measurable sets $A,B \subset \R^n$ 
with positive measure. When $\kappa = 0$, \eqref{eq:kconc-def} describes the class of 
log-concave measures which thus satisfy
$$
\mu \big (tA + (1-t)B \big ) \geq \mu(A)^t \mu(B)^{1-t}.
$$
In the absolutely continuous case,
the log-concavity of a measure is equivalent to the log-concavity of
its density (Pr\'ekopa's theorem \cite{Pre71}). When $\kappa = -\infty$, the right 
side is understood as $\min\{\mu(A), \mu(B)\}$. The inequality \eqref{eq:kconc-def} is 
getting stronger as the parameter $\k$ is increasing, so in the case 
$\k = -\infty$ we obtain the largest class, whose members are called convex 
or hyperbolic probability measures. 

For general $\k$'s, the family of $\k$-concave measures was introduced and 
studied by C. Borell \cite{Bor74,Bor75a} who gave the following characterization, which 
we state below in the absolutely-continuous case. In this case
necessarily $\k \leq 1/n$. See also \cite{BL76a}.

\begin{proposition}\label{prop:cvx-meas}
An absolutely continuous probability measure 
$\mu$ on $\R^n$ is $\k$-concave, where $-\infty \leq \k \leq 1/n$, if and 
only if $\mu$ is supported on an open convex set $\Omega \subset \R^n$, 
where it has a positive $\tilde\k$-concave density $f$, that is, satisfying
\be\label{eq:knconc-def}
f(tx + (1-t)y) \geq
\big [\,t f(x)^{\tilde\k} + (1-t) f(y)^{\tilde\k} \big ]^{1/\tilde\k}
\en
for all $t \in (0,1)$ and $x,y \in \Omega$.
\end{proposition}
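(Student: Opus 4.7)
My plan is to prove necessity and sufficiency separately, the link being Borell's relation $\tilde\k = \k/(1 - n\k)$.

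For \emph{necessity}, starting from the $\k$-concavity of $\mu$, I would first deduce convexity of the support: applying \eqref{eq:kconc-def} to any two open subsets of positive measure shows that the support is closed under convex combinations, so (after a Lebesgue-null modification) it may be taken to be an open convex set $\Omega$ on which $f > 0$. To extract the pointwise condition, I would test \eqref{eq:kconc-def} with $A = x + \epsilon_1 A_0$ and $B = y + \epsilon_2 A_0$, where $x, y \in \Omega$, $\epsilon_1, \epsilon_2 > 0$ are small, and $A_0$ is a fixed bounded convex neighborhood of $0$ with $|A_0| = 1$. Since $tA + (1-t)B = (tx+(1-t)y) + (t\epsilon_1 + (1-t)\epsilon_2)A_0$, the Lebesgue differentiation theorem, applied as $\epsilon_1, \epsilon_2 \to 0$ at any fixed ratio, yields, for a.e.\ $(x,y) \in \Omega \times \Omega$ and every $\epsilon_1, \epsilon_2 > 0$,
$$
f(tx+(1-t)y)\big(t\epsilon_1 + (1-t)\epsilon_2\big)^n \geq \big[t\big(f(x)\epsilon_1^n\big)^\k + (1-t)\big(f(y)\epsilon_2^n\big)^\k\big]^{1/\k},
$$
and after passing to a continuous representative of $f$ on $\Omega$, this holds pointwise.

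The main technical step will be upgrading the exponent from $\k$ to the larger $\tilde\k$ on the right-hand side of the inequality just displayed. By scale invariance, I would fix $t\epsilon_1 + (1-t)\epsilon_2 = 1$ and optimize the right-hand side in the remaining degree of freedom. A Lagrange multiplier calculation gives the stationary ratio $\epsilon_1/\epsilon_2 = (f(x)/f(y))^{\tilde\k}$, and substituting it back and using the algebraic identity $\k(1 + n\tilde\k) = \tilde\k$ collapses the extremal value to exactly $[t f(x)^{\tilde\k} + (1-t) f(y)^{\tilde\k}]^{1/\tilde\k}$, yielding \eqref{eq:knconc-def}. The borderline cases $\k = 0$ (log-concavity), $\k = 1/n$ (with $\tilde\k = +\infty$ forcing $f$ constant on $\Omega$), and $\k = -\infty$ are treated by the obvious limits.

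For \emph{sufficiency}, I would invoke the Borell--Brascamp--Lieb inequality \cite{BL76a}: if measurable $u, v, w : \R^n \to [0,\infty)$ satisfy $w(tx + (1-t)y) \geq [tu(x)^p + (1-t)v(y)^p]^{1/p}$ pointwise with $p \geq -1/n$, then $\int w \geq [t(\int u)^{p/(1+np)} + (1-t)(\int v)^{p/(1+np)}]^{(1+np)/p}$. Applied with $u = f\cdot 1_A$, $v = f\cdot 1_B$, $w = f\cdot 1_{tA+(1-t)B}$, and exponent $p = \tilde\k \geq -1/n$, the $\tilde\k$-concavity hypothesis on $f$ supplies the required pointwise bound on the supports, and the conclusion is precisely \eqref{eq:kconc-def} because the conjugate exponent $\tilde\k/(1 + n\tilde\k)$ equals $\k$.
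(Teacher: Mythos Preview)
The paper does not give its own proof of Proposition~\ref{prop:cvx-meas}; it is quoted as Borell's characterization theorem, with references to \cite{Bor74,Bor75a} and \cite{BL76a}, and used as a black box thereafter. Your outline is a correct reconstruction of the standard argument and is essentially the one found in those references: necessity by localizing \eqref{eq:kconc-def} on small homothetic neighborhoods and optimizing in the scale ratio, and sufficiency via the Borell--Brascamp--Lieb integral inequality with exponent $p=\tilde\k$, whose conjugate $\tilde\k/(1+n\tilde\k)$ is exactly $\k$.

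Two small comments on the necessity step. First, the displayed inequality you obtain after the Lebesgue differentiation limit is homogeneous of degree $n$ in $(\epsilon_1,\epsilon_2)$, so once it holds for every fixed ratio it holds for all $\epsilon_1,\epsilon_2>0$; then the substitution $\epsilon_1/\epsilon_2=(f(x)/f(y))^{\tilde\k}$ need not be argued as an extremum at all --- it is simply an admissible choice, and your identity $\k(1+n\tilde\k)=\tilde\k$ does the rest. Second, the passage ``after passing to a continuous representative of $f$'' hides a small technicality: one first gets \eqref{eq:knconc-def} for a.e.\ $(x,y)$ and then uses that a function satisfying such a power-mean inequality a.e.\ on a convex set admits a $\tilde\k$-concave (hence continuous on the interior) modification; this is routine but worth a sentence if you write the proof out in full.
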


Here and below we put
$$
\tilde\k = \frac{\k}{1 - n\k}, \qquad \beta = \frac{1}{|\tilde\k|}.
$$
Thus, $\mu$ is $\k$-concave if and only if $f$ is $\tilde\k$-concave.

If $\k\in (0,1/n)$, then $\tilde\k > 0$ and $\beta>0$, and the supporting set $\Omega$ has to be bounded
(so, its closure is a convex body). In this case, one may represent the 
density in the form $f = \varphi^{\beta}$, where $\varphi$ is an arbitrary 
positive concave function on $\Omega$, satisfying the normalization 
condition $\int_\Omega \varphi^{\beta}\,dx = 1$.

If $\k<0$, then $\tilde\k < 0$ and $f = V^{-\beta}$ 
(like in formula \eqref{eq:cvx-def}), where $V$ is an arbitrary positive convex function 
on $\Omega$, satisfying $\int_\Omega V^{-\beta}\,dx = 1$. Since $\beta=n-(1/\k)$
in this case, we must have $\beta>n$.

\vskip2mm
The following statement has been also well-known since the works of C. Borell,
cf. e.g. \cite[Theorem 4.5]{Bor75a}. (There it is assumed additionally that 
$0<\k',\k''<1/n$, while we will also need to consider the case when one of
$\k'$ or $\k''$ is negative. Nevertheless, Borell's result \cite[Theorem 4.2]{Bor75a} 
about $\kappa$-concavity of product measures covers the general case.)

\begin{proposition}\label{prop:cvx-conv}
Assume a probability measure $\mu$ is $\k'$-concave
on $\R^n$ and a probability measure $\nu$ is $\k''$-concave on $\R^n$. 
If $\k', \k'' \in [-1,1]$ satisfy
\be\label{eq:cvx-conv}
\k' + \k'' > 0, \qquad \frac{1}{\k} = \frac{1}{\k'} + \frac{1}{\k''},
\en
then their convolution $\mu * \nu$ is $\k$-concave.
\end{proposition}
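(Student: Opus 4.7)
The plan is to reduce the claim about convolutions to the analogous claim about products. If $T:\R^n \times \R^n \to \R^n$ denotes the addition map $T(x,y) = x+y$, then the convolution $\mu * \nu$ is precisely the push-forward $T_*(\mu \otimes \nu)$. Thus it suffices to establish two facts: first, that the product measure $\mu \otimes \nu$ on $\R^{2n}$ is $\k$-concave with the value of $\k$ prescribed in \eqref{eq:cvx-conv}; and second, that $\k$-concavity is preserved under the linear surjection $T$ (and indeed under any linear map).

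For the first fact, I would invoke Borell's theorem on product measures \cite[Theorem 4.2]{Bor75a}, which asserts that if $\mu$ is $\k'$-concave on $\R^{n_1}$ and $\nu$ is $\k''$-concave on $\R^{n_2}$ with $\k',\k'' \in [-1,1]$ and $\k' + \k'' > 0$, then $\mu \otimes \nu$ is $\k$-concave on $\R^{n_1+n_2}$ for the $\k$ given by the harmonic-mean relation $1/\k = 1/\k' + 1/\k''$. The hypothesis $\k' + \k'' > 0$ is precisely what makes the resulting $\k$ well-defined with the correct sign and ensures the product hierarchy does not collapse.

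For the second fact, the argument is a short inclusion computation. Given Borel sets $A,B \subset \R^n$ with positive convolution measure and $t \in (0,1)$, linearity of $T$ gives the inclusion
\[
T^{-1}\bigl(tA + (1-t)B\bigr) \supset t\, T^{-1}(A) + (1-t)\, T^{-1}(B),
\]
because if $T(u) \in A$ and $T(v) \in B$, then $T(tu+(1-t)v) = tT(u)+(1-t)T(v) \in tA+(1-t)B$. Applying the $\k$-concavity of $\mu \otimes \nu$ on $\R^{2n}$ to the right-hand side yields
\[
(\mu * \nu)\bigl(tA + (1-t)B\bigr) \;\geq\; \bigl[t\,(\mu*\nu)(A)^{\k} + (1-t)\,(\mu*\nu)(B)^{\k}\bigr]^{1/\k},
\]
which is exactly the $\k$-concavity of $\mu*\nu$ in the sense of \eqref{eq:kconc-def}.

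The main obstacle is genuinely the product-measure statement, but it is classical and explicitly available in Borell's paper; once it is in hand, the passage from product to convolution via push-forward under the addition map is essentially formal. A minor bookkeeping point is checking that the definitional inequality \eqref{eq:kconc-def} is interpreted correctly in the boundary cases (e.g., when $\k = 0$ one reads it as the geometric-mean inequality, recovering the Pr\'ekopa--Leindler-type preservation of log-concavity under convolution), but no new ideas are required there.
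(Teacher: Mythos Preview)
Your proposal is correct and follows essentially the same two-step route as the paper: first establish $\k$-concavity of the product measure $\mu\otimes\nu$ on $\R^{2n}$, then push forward under the addition map. The only difference is that where you invoke Borell's Theorem~4.2 as a black box, the paper also sketches its proof (via the $M_\k$-mean inequality $M_{\k'}^{(t)}(a',b')\,M_{\k''}^{(t)}(a'',b'') \geq M_{\k}^{(t)}(a'a'',b'b'')$ applied to parallelotopes, followed by the Hadwiger--Ohmann bisection argument); the paper itself notes that citing Borell directly suffices.
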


Taking the limit $\k',\k'' \rightarrow 0$, one also obtains the 
log-concavity of the convolution of any two log-concave probability measures.

The argument is based on the following elementary property of the
$M_\k$-mean functions defined by
$$
M_\k^{(t)}(a,b) = (ta^\k + sb^\k)^{1/\k}, \qquad 
a,b \geq 0, \ \ 0<t<1, \ s=1-t,
$$
with the usual meaning in the cases $\k = -\infty$, $\k = +\infty$ and $\k = 0$,
as $\min\{a,b\}$, $\max\{a,b\}$ and $a^t b^s$, respectively.
(Note these functions appear on the right sides of \eqref{eq:kconc-def} and \eqref{eq:knconc-def}.)
Namely, under the condition \eqref{eq:cvx-conv}, for all real positive 
numbers $a',a'',b',b''$ and any $t \in (0,1)$,
$$
M_{\k'}^{(t)}(a',b')\, M_{\k''}^{(t)}(a'',b'')\, \geq \,
M_{\k}^{(t)}(a'a'',b'b'').
$$
Consequently, if $A = A' \otimes A''$ and $B = B' \otimes B''$ with
standard parallelotopes $A',B'$ in $\R^n$ of positive $\mu$-measure, 
and with standard parallelotopes $A'',B''$ in $\R^n$ of positive 
$\nu$-measure, then 
$$
tA + sB = (tA' + sB') \times (tA'' + sB''),
$$ 
and, using the definition \eqref{eq:kconc-def}, for the product measure 
$\lambda = \mu \otimes \nu$ we have:
\bee
\lambda(tA + sB)
 & = & 
\mu(tA' + sB')\, \nu(tA'' + sB'') \\
 & \geq &
M_{\k'}^{(t)}(\mu(A'),\mu(B'))\,
M_{\k''}^{(t)}(\nu(A''),\nu(B'')) \\
 & \geq & 
M_{\k}^{(t)}(\mu(A')\nu(A''),\mu(B')\nu(B'')) \\
 & = &
M_{\k}^{(t)}(\lambda(A),\,\lambda(B)).
\ene
That is, the Brunn-Minkowski-type inequality \eqref{eq:kconc-def} is fulfilled for the
measure $\lambda$ on $\R^{2n}$ in the class of all standard parallelotopes 
(of positive measure). By virtue of the standard bisection argument of 
Hadwiger-Ohmann \cite{HO56}, described, for example, in \cite{Bor74,Bor75a,BZ88:book}, one can 
extend \eqref{eq:kconc-def} from the class of standard parallelopipeds to arbitrary Borel 
sets $A$ and $B$, which means the $\k$-concavity of $\lambda$ on $\R^{2n}$. 
Finally, since $\mu * \nu$ represents the image of $\lambda$ under the 
linear map $(x,y) \rightarrow x+y$, the convolution is also $\k$-concave.

\vskip2mm
One particular case of Proposition~\ref{prop:cvx-conv} is the following well-known corollary:

\begin{corollary}\label{cor:convo}
If random vectors $X_1,\dots,X_m$ are independent
and uniformly distributed in convex bodies $A_1,\dots,A_m$ in $\R^n$, then
the sum
$$
X_1 + \dots + X_m
$$
has a $\frac{1}{mn}$--concave distribution supported on the convex body
$A_1 + \dots + A_m$.
\end{corollary}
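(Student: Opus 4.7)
The plan is to obtain this as a straightforward iterated application of Proposition~\ref{prop:cvx-conv}, once the base case of a single uniform distribution is settled. First I would observe that the uniform probability measure on a convex body $A \subset \R^n$ is $\frac{1}{n}$-concave. This is essentially the Brunn-Minkowski inequality in its classical form $|tA_0 + (1-t)B_0|^{1/n} \geq t|A_0|^{1/n} + (1-t)|B_0|^{1/n}$ for Borel sets $A_0,B_0$, rewritten for the normalized measure $\mu_A(E) = |E \cap A|/|A|$. Equivalently, via Proposition~\ref{prop:cvx-meas}, the uniform density on $A$ is $\tilde\k$-concave with $\tilde\k = +\infty$ (being constant on its support), which corresponds to $\k = 1/n$ under the relation $\tilde\k = \k/(1-n\k)$.

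Next I would iterate Proposition~\ref{prop:cvx-conv}. For the base step, with $\k' = \k'' = 1/n$, the hypotheses $\k'+\k''=2/n>0$ and $\k',\k'' \in [-1,1]$ (for $n\geq 1$) are satisfied, and $1/\k = 1/\k' + 1/\k'' = 2n$ gives that the distribution of $X_1+X_2$ is $\frac{1}{2n}$-concave. Proceeding by induction: if $X_1+\dots+X_k$ is known to be $\frac{1}{kn}$-concave, then convolving with the $\frac{1}{n}$-concave law of $X_{k+1}$ yields, via Proposition~\ref{prop:cvx-conv}, a $\k_{k+1}$-concave distribution with $1/\k_{k+1} = kn + n = (k+1)n$; the positivity condition $\frac{1}{kn} + \frac{1}{n} > 0$ is trivial. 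After $m-1$ steps we reach $\k = \frac{1}{mn}$, as desired.

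For the support claim, the density of $X_1+\dots+X_m$ is the $m$-fold convolution of the densities $1_{A_i}/|A_i|$, so its support is contained in the Minkowski sum $A_1+\dots+A_m$; conversely, every point in this sum is in the support since each $A_i$ has nonempty interior. There is really no substantive obstacle here: the only point to be careful about is to maintain the induction hypothesis in the form needed by Proposition~\ref{prop:cvx-conv}, which is automatic because all the $\k_k$ remain strictly positive throughout the iteration.
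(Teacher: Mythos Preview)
Your proof is correct and takes essentially the same approach as the paper, which simply labels the corollary as ``one particular case of Proposition~\ref{prop:cvx-conv}'' without spelling out the details. Your inductive application of Proposition~\ref{prop:cvx-conv}, starting from the observation that the uniform law on a convex body is $\frac{1}{n}$-concave, is exactly the intended argument.
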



\section{Entropy and volume of the support}
\label{sec:ent-vol}
\setcounter{equation}{0}

In this Section we bound the entropy of a $\k$-concave probability measure
on $\R^n$ with a positive parameter of convexity $\k$ in terms of the 
volume of its supporting set. 
Note that, for any random vector $X$ with values in $A$, there is a general 
upper bound
\be\label{eq:3ub}
h(X) \leq \log |A|.
\en
So our concern is how to estimate the entropy from below to get
\be\label{eq:3lb}
h(X) \geq -Cn + \log |A|
\en
with constants $C \geq 0$ depending only on the ``strength'' of convexity of
the density $f$ of $X$.

To proceed, we need some preparations. Given a measurable function $\varphi$ 
on a measurable set $A \subset \R^n$ and $p>0$, write
$$
\|\varphi\|_p = \bigg(\int_A |\varphi|^p\,dx\bigg)^{\!1/p}.
$$
The following Khinchin-type (or reverse H\"older) inequality for the class 
of concave functions is due to Berwald \cite{Ber47} (cf. \cite{Bor73a}).

\begin{lemma}\label{lem:berwald}
Given a concave function $\varphi \geq 0$ on a convex 
body $A$ in $\R^n$,
\be\label{eq:berwald}
\left(C_{n+q}^{n}\,|A|^{-1}\right)^{1/q}\ \|\varphi\|_q \leq
\left(C_{n+p}^{n}\,|A|^{-1}\right)^{1/p}\ \|\varphi\|_p, \qquad
0<p<q.
\en
\end{lemma}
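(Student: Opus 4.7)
The plan is to exploit the Brunn--Minkowski inequality to obtain a strong concavity property of the distribution function of $\varphi$, and then to reduce the $n$-dimensional inequality \eqref{eq:berwald} to a one-dimensional comparison against an affine extremizer via a Steffensen-type argument. Specifically, I introduce $F(t) = |\{x \in A : \varphi(x) > t\}|$ and observe that since $\varphi$ is concave, the super-level sets $\{\varphi > t\}$ are convex and satisfy the Minkowski-type inclusion $\{\varphi > \lambda s + (1-\lambda) t\} \supseteq \lambda \{\varphi > s\} + (1-\lambda)\{\varphi > t\}$; the classical Brunn--Minkowski inequality then yields that $F^{1/n}$ is concave and non-increasing on $[0, M]$, where $M = \mathrm{ess\,sup}\,\varphi$, with $F(M) = 0$ and $F(0) \leq |A|$ (and, after replacing $A$ by the convex set $\{\varphi > 0\}$, I may assume $F(0) = |A|$). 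Together with the layer-cake identity $\|\varphi\|_p^p = p\int_0^M t^{p-1} F(t)\,dt$, this reduces all $L^p$-norms of $\varphi$ to one-dimensional weighted integrals of the single concave function $F^{1/n}$.

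Next, I identify the equality case: the affine-truncated functions $\varphi_\ast(x) = M_\ast(1 - L(x))_+$ supported on a simplex of volume $|A|$, for which $F_\ast^{1/n}$ is itself affine on $[0, M_\ast]$. A direct Beta-function calculation, $\int_0^{M_\ast} t^{r-1}(1-t/M_\ast)^n\,dt = M_\ast^r/(r\,C^n_{n+r})$, shows that the functional $T_\varphi(r) := (C^n_{n+r}/|A|)^{1/r} \|\varphi\|_r$ evaluated on $\varphi_\ast$ equals $M_\ast$ identically in $r$. Thus inequality \eqref{eq:berwald} is equivalent to asserting that $r \mapsto T_\varphi(r)$ is non-increasing for every concave $\varphi \geq 0$.

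To prove this monotonicity, I fix $q > 0$ and select the affine extremizer $\varphi_\ast$ (by adjusting $M_\ast$) so that $\|\varphi_\ast\|_q = \|\varphi\|_q$; then $T_\varphi(q) = T_{\varphi_\ast}(q) = M_\ast = T_{\varphi_\ast}(p)$ for every $p > 0$, and the goal becomes $\|\varphi\|_p \geq \|\varphi_\ast\|_p$, equivalently $\int_0^\infty t^{p-1}(F - F_\ast)\,dt \geq 0$, under the matching constraint $\int_0^\infty t^{q-1}(F - F_\ast)\,dt = 0$. Since $F^{1/n}$ is concave while $F_\ast^{1/n}$ is affine, the difference $F^{1/n} - F_\ast^{1/n}$, and hence $F - F_\ast$, has at most two sign changes on $[0,\infty)$, with the signs at the endpoints pinned by $F(0) \leq F_\ast(0)$ and by the relative order of $M$ and $M_\ast$. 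A Steffensen/Chebyshev-type comparison against the decreasing weight $t^{p-q}$ then converts the matched $q$-moment into the desired $p$-moment inequality.

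The main obstacle is this last step: the sign-change analysis of $F - F_\ast$ must be verified carefully in the two subcases $M \leq M_\ast$ and $M \geq M_\ast$, and the principle ``at most two crossings plus matched higher moment implies the lower-moment inequality'' requires some one-dimensional bookkeeping (essentially a version of Steffensen's inequality). Once this one-dimensional comparison is in hand, the reduction through the distribution function and the explicit extremal computation close the proof of \eqref{eq:berwald}.
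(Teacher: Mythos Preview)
The paper does not give its own proof of Lemma~\ref{lem:berwald}; it is quoted as a classical result of Berwald (with a pointer to Borell), and then used as a black box to derive Proposition~\ref{prop:cvx-ent}. So there is nothing in the paper to compare your argument against.

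That said, your outline is essentially the standard Berwald/Borell proof and is correct. The reduction via Brunn--Minkowski to the concavity of $t\mapsto F(t)^{1/n}$ on $[0,M]$, the layer-cake formula, and the identification of the affine (cone) extremizer with constant $T_{\varphi_\ast}(r)\equiv M_\ast$ are all right, including your Beta-integral computation.

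One point worth tightening is the sign-change count in the final step. After your normalization $F(0)=F_\ast(0)=|A|$, on the interval $[0,\min(M,M_\ast)]$ the function $g=F^{1/n}-F_\ast^{1/n}$ is concave (concave minus affine) with $g(0)=0$. If $M_\ast\le M$ then $g(M_\ast)=F^{1/n}(M_\ast)\ge 0$, and a concave function nonnegative at both endpoints is nonnegative throughout; hence $F\ge F_\ast$ everywhere, which together with the matched $q$-moment forces $F\equiv F_\ast$. In the remaining (generic) case $M_\ast>M$ one has $g(M)=-F_\ast^{1/n}(M)<0$, so $g$ (and therefore $F-F_\ast$) changes sign \emph{exactly once} on $(0,\infty)$, from $+$ to $-$. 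With a single sign change your Chebyshev/Steffensen step is immediate: writing $\int t^{p-1}(F-F_\ast)\,dt=\int t^{p-q}\cdot t^{q-1}(F-F_\ast)\,dt$ and using that $t^{p-q}$ is decreasing while $t^{q-1}(F-F_\ast)$ has a single $+\to-$ sign change with total integral $0$ gives the desired nonnegativity. Your phrase ``at most two sign changes'' is harmless but loose; had there really been two, a single matched moment would not suffice.
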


Here and below we use the standard binomial coefficients
\be\label{eq:binom}
C_{q}^{n} = \frac{q(q-1)\dots(q-n+1)}{n!}.
\en
As easy to verify, the equality in \eqref{eq:berwald} is achieved for the linear function 
$f(x) = x_1 + \dots + x_n$ on the convex body 
\be\label{eq:simplex}
A = \{x \in \R^n: x_i > 0, \ x_1 + \dots + x_n < 1\}.
\en
 
Berwald's inequality may equivalently be stated for the class of ${\tilde\k}$-concave 
probability density functions $f$ on $A$ with ${\tilde\k}>0$, since then 
$f = \varphi^{1/{\tilde\k}}$ with concave $\varphi$. 
Inserting $\varphi = f^{\tilde\k}$ into \eqref{eq:berwald}, we get
$$
\left(C_{n+q}^{n}\,|A|^{-1}\right)^{1/q}\ \|f\|_{q{\tilde\k}}^{\tilde\k} \leq
\left(C_{n+p}^{n}\,|A|^{-1}\right)^{1/p}\ \|f\|_{p{\tilde\k}}^{\tilde\k}.
$$
Choose $p =\beta= 1/{\tilde\k}$ so that $\|f\|_{p{\tilde\k}} = \|f\|_1 = 1$. The inequality
is simplified (but does not lose generality):
$$
\left(C_{n+q}^{n}\,|A|^{-1}\right)^{1/q}\ \|f\|_{q{\tilde\k}}^{\tilde\k} \leq
\left(C_{n+1/{\tilde\k}}^{n}\,|A|^{-1}\right)^{{\tilde\k}}.
$$
Raising to the power $q$ and then substituting $q{\tilde\k}$ with $q$, we obtain
another equivalent form
$$
C_{n+q\beta}^{n}\,|A|^{-1}\, \int_A f(x)^q\,dx \ \leq \
\left(C_{n+\beta}^{n}\,|A|^{-1}\right)^{q},
$$
which holds true for any $q>1$. There is equality at $q=1$, so one may compare
the derivatives. First let us take logarithms of both the sides:
\be\label{eq:3inter1}
\log C_{n+q\beta}^{n} + \log\, |A|^{-1} + \log \int_A f(x)^q\,dx 
\ \leq \
q\, \log \left(C_{n+\beta}^{n}\,|A|^{-1}\right).
\en
By the definition \eqref{eq:binom},
$$
\frac{d}{dr}\, \log C_{n+r}^n = \sum_{i=1}^n \frac{1}{r+i}.
$$
Hence, differentiating \eqref{eq:3inter1} at $q=1$, we get
$$
\sum_{i=1}^n \frac{1}{1 + i/\beta} + \int_A f(x) \log f(x)\,dx 
\ \leq \
\log \left(C_{n+\beta}^{n}\,|A|^{-1}\right),
$$
or equivalently
\be\label{eq:3inter2}
h(X) \geq \log |A| + \sum_{i=1}^n \frac{1}{1 + i/\beta} - \log C_{n+\beta}^{n},
\en
assuming that $X$ has density $f$. 

Now, let us rewrite \eqref{eq:3inter2} in terms of the convexity parameter of the 
distribution of $X$ by applying the Borell characterization given in
Proposition~\ref{prop:cvx-meas}. Recall that if $X$ has an absolutely continuous $\k$-concave 
distribution supported on $A$ with $0 < \k \leq 1/n$, then it has a
${\tilde\k}$-concave density $f$, where ${\tilde\k} = \frac{\k}{1 - \k n}$.

\begin{proposition}\label{prop:cvx-ent}
Let $X$ be a random vector in $\R^n$ having an
absolutely continuous $\k$-concave distribution supported on a convex
body $A$ with $0 < \k \leq 1/n$. Then
\be\label{eq:3cvx-ent}
h(X) \geq \log |A| + \sum_{i=1}^n \frac{1}{1 + {\tilde\k} i} - \log C_{1/\k}^{n},
\en
where ${\tilde\k} = \frac{\k}{1 - \k n}$.
\end{proposition}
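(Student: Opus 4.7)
The plan is to use Borell's characterization to reduce from a $\k$-concave measure to a $\tilde\k$-concave density, apply Berwald's reverse Hölder inequality (Lemma~\ref{lem:berwald}) to the concave function $\varphi = f^{\tilde\k}$, and then differentiate the resulting one-parameter family of moment inequalities at $q=1$ to extract the entropy. The translation from $\beta$ back to $\k$ is purely algebraic at the end.

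First, by Proposition~\ref{prop:cvx-meas}, the density $f$ of $X$ is $\tilde\k$-concave on $A$ with $\tilde\k = \k/(1-\k n) > 0$, so $f = \varphi^{\beta}$ with $\beta = 1/\tilde\k$ and $\varphi$ a positive concave function on $A$. Applying Lemma~\ref{lem:berwald} to $\varphi$ with $p = \beta$ (chosen so that $\|\varphi\|_p = \|f\|_1^{1/\beta} = 1$), raising to the power $q$, and reparametrizing $q\tilde\k \to q$, I obtain
\be\label{eq:planbw}
C_{n+q\beta}^{n}\,|A|^{-1}\int_A f(x)^q\,dx \ \leq\ \bigl(C_{n+\beta}^{n}\,|A|^{-1}\bigr)^q
\en
for all $q \geq 1$, with equality at $q=1$ since both sides equal $C_{n+\beta}^n|A|^{-1}$.

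Since \eqref{eq:planbw} is an equality at $q=1$ and the left side stays below the right for $q>1$, the one-sided derivative of the difference at $q=1$ has the correct sign. Taking logarithms and differentiating at $q=1$, I use the elementary identity $\frac{d}{dr}\log C_{n+r}^{n} = \sum_{i=1}^n (r+i)^{-1}$ and the identity $\frac{d}{dq}\log\!\int f^q\,dx\bigl|_{q=1} = \int f\log f\,dx = -h(X)$. The inequality between the two derivatives becomes
$$
\sum_{i=1}^n \frac{1}{1+i/\beta}\ -\ h(X) \ \leq\ \log C_{n+\beta}^{n}\ -\ \log |A|,
$$
which rearranges to the desired bound in terms of $\beta$. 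To convert to the form stated, I use $\beta = 1/\tilde\k = 1/\k - n$, so that $i/\beta = \tilde\k i$ and $n+\beta = 1/\k$, yielding exactly \eqref{eq:3cvx-ent}.

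The only non-routine step is justifying the differentiation under the integral sign and exchanging differentiation with the logarithm at the boundary point $q=1$. This is harmless here: on a convex body $A$, a $\tilde\k$-concave probability density $f$ with $\tilde\k > 0$ is automatically bounded (it is a power of a concave function on a bounded convex domain), so $f^q\log f$ is dominated uniformly for $q$ in a neighborhood of $1$ and dominated convergence applies. Everything else is algebraic manipulation of the binomial coefficient identity.
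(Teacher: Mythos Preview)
Your proof is correct and follows essentially the same route as the paper's: Borell's characterization to write $f=\varphi^\beta$, Berwald's inequality applied to $\varphi=f^{\tilde\k}$ with the choice $p=\beta$, the reparametrization yielding \eqref{eq:planbw}, and differentiation of the logarithm at $q=1$ using the derivative formula for $\log C_{n+r}^n$. Your added remark justifying the differentiation under the integral (via boundedness of $f$ on the compact support $A$) is a welcome detail that the paper omits.
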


For each $\k$, equality in \eqref{eq:3cvx-ent} is attained for a special distribution
supported on the set $A$ defined in \eqref{eq:simplex}, with density $f(x)$ proportional 
to $(x_1 + \dots + x_n)^{1/{\tilde\k}}$. 
For example, if $\k = 1/n$, then
${\tilde\k} = +\infty$, and $X$ is to be uniformly distributed in $A$.
In this case, \eqref{eq:3cvx-ent} becomes just $h(X) \geq \log |A|$.

To simplify the bound \eqref{eq:3cvx-ent}, using again the notation $\beta = 1/{\tilde\k}$, 
we need to estimate from above the quantity
\be\label{eq:3inter3}
\log C_{n+\beta}^{n} - \sum_{i=1}^n \frac{\beta}{\beta + i} =
\sum_{i=1}^n \bigg[ \log \frac{\beta+i}{i} - \frac{\beta}{\beta + i}\bigg].
\en
In terms of $t = \beta/i$,
the general term in the sum on the right side may be written as
$$
\log \frac{\beta+i}{i} - \frac{\beta}{\beta + i} = \log(1+t) - \frac{t}{1+t},
$$
which is increasing in $t \geq 0$. Hence, the function
$s \rightarrow \log \frac{\beta+s}{s} - \frac{\beta}{\beta + s}$ is non-increasing. 
For any non-increasing continuous function $u = u(s) \geq 0$ in $s \geq 1$, 
one may use a general elementary bound
$$
\sum_{i=1}^n u(i) \leq u(1) + \int_1^n u(s)\,ds.
$$
In case of $u(s) = \log \frac{\beta+s}{s} - \frac{\beta}{\beta + s}$, we then get that
the sum on the right side of \eqref{eq:3inter3} is bounded by
\bee
 &
\hskip-90mm
\big[\log(\beta+1) - \frac{\beta}{\beta + 1}\big] + \int_1^n
\big[\log \frac{\beta+s}{s} - \frac{\beta}{\beta + s}\big]\,ds \\
\hskip40mm
 & = \
n \log(\beta+n) - n \log n - \frac{\beta}{\beta+1} \ \leq \
n \log \frac{\beta+n}{n}.
\ene

\noindent
Now, since $\beta = \frac{1}{{\tilde\k}} = \frac{1}{\k} - n$, we have $\beta+n = \frac{1}{\k}$
and therefore arrive at:

\begin{corollary}\label{cor:cvx-ent2}
Let $X$ be a random vector in $\R^n$ having an
absolutely continuous $\k$-concave distribution supported on a convex
body $A$ with $0 < \k \leq 1/n$. Then
$$
h(X) \geq \log |A| + n \log(\k n).
$$
\end{corollary}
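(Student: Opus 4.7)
The plan is to start from Proposition~\ref{prop:cvx-ent}, rewrite the right-hand side in terms of $\beta = 1/\tilde\kappa = \frac{1}{\kappa}-n$, and then control the ``error term'' $\log C^{n}_{n+\beta} - \sum_{i=1}^{n}\frac{\beta}{\beta+i}$ by a clean integral estimate. Concretely, the bound
\[
h(X) \geq \log|A| + \sum_{i=1}^{n}\frac{1}{1+\tilde\kappa i} - \log C_{1/\kappa}^{\,n}
\]
from Proposition~\ref{prop:cvx-ent} can be rearranged, using $\frac{1}{1+\tilde\kappa i} = \frac{\beta}{\beta+i}$ and $C_{1/\kappa}^{\,n}=C_{n+\beta}^{\,n}$, into
\[
h(X) \geq \log|A| - \sum_{i=1}^{n}\Bigl[\log\tfrac{\beta+i}{i} - \tfrac{\beta}{\beta+i}\Bigr].
\]
So the whole task reduces to showing that the sum on the right is at most $-n\log(\kappa n) = n\log\frac{\beta+n}{n}$.

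The key observation, which I would state first, is that each summand equals $\phi(\beta/i)$ with $\phi(t) = \log(1+t) - \frac{t}{1+t}$. Since $\phi$ is increasing in $t\geq 0$ (one can check $\phi'(t) = t/(1+t)^2 \geq 0$), the function $s\mapsto \log\frac{\beta+s}{s} - \frac{\beta}{\beta+s}$ is non-increasing in $s\geq 1$. This monotonicity is the structural input that makes a Riemann-sum/integral comparison available.

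For any non-increasing continuous $u \geq 0$ on $[1,\infty)$, one has the elementary estimate $\sum_{i=1}^{n} u(i) \leq u(1) + \int_{1}^{n} u(s)\,ds$ (by bounding each $u(i)$ for $i \geq 2$ by $\int_{i-1}^{i} u(s)\,ds$). Applying this to $u(s) = \log\frac{\beta+s}{s} - \frac{\beta}{\beta+s}$ and evaluating the integral explicitly --- the antiderivative of $\log(\beta+s)-\log s$ is $(s+\beta)\log(s+\beta)-s\log s - \beta$, and $\int \frac{\beta}{\beta+s}\,ds = \beta\log(\beta+s)$, so the two pieces combine neatly --- yields, after cancellation at the endpoints,
\[
u(1) + \int_{1}^{n} u(s)\,ds \;\leq\; n\log\tfrac{\beta+n}{n}.
\]
Since $\beta + n = 1/\kappa$, the right side equals $-n\log(\kappa n)$, which plugs back into the Proposition~\ref{prop:cvx-ent} bound to give exactly the claimed inequality.

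The main obstacle is really just bookkeeping: making sure the constant at $s=1$ (the $u(1)$ boundary contribution and the $-\frac{\beta}{\beta+1}$ terms) is absorbed cleanly so that one obtains the neat bound $n\log\frac{\beta+n}{n}$ without extra additive constants. I expect the $u(1)$ term to combine with the boundary value $-n\log n$ from the integral and with $-\frac{\beta}{\beta+1}$ (which is $\leq 0$ after sign flip) in such a way that everything collapses to the single logarithm, as suggested by the computation already sketched in the paragraph preceding the corollary. No deeper ideas are needed beyond monotonicity of $\phi$ and the Riemann-sum bound.
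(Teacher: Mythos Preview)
Your proposal is correct and is essentially identical to the paper's own argument: start from Proposition~\ref{prop:cvx-ent}, rewrite the error term as $\sum_{i=1}^n[\log\frac{\beta+i}{i}-\frac{\beta}{\beta+i}]$, observe that each summand is $\phi(\beta/i)$ with $\phi$ increasing so that $u(s)$ is non-increasing, apply $\sum_{i=1}^n u(i)\le u(1)+\int_1^n u(s)\,ds$, and evaluate to get $n\log\frac{\beta+n}{n}-\frac{\beta}{\beta+1}\le n\log\frac{\beta+n}{n}=-n\log(\kappa n)$. The bookkeeping you flag as the ``main obstacle'' indeed collapses exactly as you anticipate, with the $\log(\beta+1)$ from $u(1)$ cancelling the boundary term of the integral and the leftover $-\frac{\beta}{\beta+1}$ simply being dropped.
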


Note when $\k = 1/n$, this bound is still sharp.

Now we can combine Corollaries~\ref{cor:convo} and \ref{cor:cvx-ent2} to obtain immediately:

\begin{proposition}\label{prop:vol-ent}
If random vectors $X_1,\dots,X_m$ are independent
and uniformly distributed in convex bodies $A_1,\dots,A_m$ in $\R^n$, then
their sum $S_m = X_1 + \dots + X_m$ has entropy, satisfying
$$
\log |A_1 + \dots + A_m| - n \log m\, \leq\,
h(S_m)\, \leq\ \log |A_1 + \dots + A_m|.
$$
\end{proposition}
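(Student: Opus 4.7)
The plan is to assemble the proposition directly from the two corollaries that immediately precede it. First I would invoke Corollary~\ref{cor:convo} to identify the distribution of $S_m = X_1 + \dots + X_m$ as a $\frac{1}{mn}$-concave probability measure on $\R^n$, supported on the Minkowski sum $A_1 + \dots + A_m$, which is itself a convex body. This is exactly the setup required by Corollary~\ref{cor:cvx-ent2}.

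Next I would apply Corollary~\ref{cor:cvx-ent2} with $\k = \frac{1}{mn}$ and $A = A_1 + \dots + A_m$. The condition $0 < \k \leq 1/n$ is automatic for any $m \geq 1$, so the hypotheses are satisfied, and the conclusion yields
$$
h(S_m) \ \geq \ \log |A_1 + \dots + A_m| + n \log(\k n) \ = \ \log |A_1 + \dots + A_m| - n \log m,
$$
which is precisely the lower bound in the statement.

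The upper bound is even more immediate: since $S_m$ almost surely takes values in the set $A_1 + \dots + A_m$, the general inequality \eqref{eq:3ub} (the maximum-entropy property of the uniform distribution on a set of finite volume) gives $h(S_m) \leq \log |A_1 + \dots + A_m|$ at once. There is essentially no obstacle in this argument, as the proposition is a direct synthesis of the preparatory results; the only minor point to verify is that the convexity parameter $\k = \frac{1}{mn}$ furnished by Corollary~\ref{cor:convo} indeed lies in the range in which Corollary~\ref{cor:cvx-ent2} is stated, and that the support coincides with the Minkowski sum (both verifications are straightforward).
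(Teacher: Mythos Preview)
Your proposal is correct and matches the paper's own argument exactly: the paper simply states that the proposition follows immediately by combining Corollaries~\ref{cor:convo} and~\ref{cor:cvx-ent2}, and you have spelled out precisely that combination, together with the trivial upper bound~\eqref{eq:3ub}.
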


Or, equivalently,
$$
\log \bigg|\frac{A_1 + \dots + A_m}{m}\bigg|\, \leq\,
h(S_m)\, \leq\ \log |A_1 + \dots + A_m|.
$$

In particular, for independent random vectors $X$ and $Y$ in $\R^n$ uniformly
distributed in convex bodies $A$ and $B$, respectively, we always have
$$
\log \bigg|\frac{A + B}{2}\bigg| \leq\, h(X+Y) \leq\ \log |A + B|.
$$
These are exactly the inequalities in \eqref{eq:volsum}, announced in the introductory 
section.


\section{Entropy and maximum of density}
\label{sec:ent-max}
\setcounter{equation}{0}

Any convex probability measure has a bounded density,
i.e., the $L^\infty$-norm $\|f\| = \sup_x f(x)$ of the density $f$
is finite (cf. \cite{Bob07}).
For sufficiently convex probability measures, the entropy may be related 
to  $\|f\|$  via the following proposition, proved in \cite{BM11:it}.

\begin{proposition}\label{prop:cvx-maxnorm}
Fix $\beta_0 > 1$. Assume a random vector $X$ in $\R^n$ has 
a density $f = V^{-\beta}$, where $V$ is a positive convex function
on the supporting set.
If $\beta \geq n+1$ and $\beta \geq \beta_0 n$, then
$$
\log\, \|f\|^{-1/n} \leq\,
\frac{1}{n}\, h(X)  \leq\, C_{\beta_0} + \log\, \|f\|^{-1/n}
$$
with some constant $C_{\beta_0}$ depending only on $\beta_0$.
\end{proposition}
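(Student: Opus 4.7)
The lower bound $\frac{1}{n} h(X) \geq \log \|f\|^{-1/n}$ is immediate: since $-\log f(x) \geq -\log \|f\|$ pointwise, integrating against $f$ gives $h(X) \geq -\log \|f\|$.

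For the upper bound, I first reduce to the normalized case $\|f\| = 1$ via the dilation $X \mapsto \|f\|^{1/n} X$. Its density is again of the form $V'^{-\beta}$ with $V'$ positive and convex, has sup-norm $1$, and its entropy differs from $h(X)$ by $\log \|f\|$. After a translation I may further take $V(0) = \min V = 1$, so $V \geq 1$ on $\R^n$; the target becomes $h(X) \leq n\,C_{\beta_0}$. From $f = V^{-\beta}$ one reads off $h(X) = \beta\,\E \log V(X)$, and the elementary bound $\log V \leq V - 1$ (valid since $V \geq 1$) reduces matters to the reverse H\"older-type estimate
$$\int V^{1-\beta}\,dx\ \leq\ \frac{\beta - 1}{\beta - n - 1},$$
which is sharp on the simplex-like example $V(x) = 1 + x_1 + \cdots + x_n$ over the positive orthant. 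Granting this, $h(X) \leq \beta n/(\beta - n - 1)$, and for $\beta \geq \max\{\beta_0 n,\, n+1\}$ with $\beta_0 > 1$ the factor $\beta/(\beta - n - 1)$ is bounded by a constant depending only on $\beta_0$, delivering $h(X) \leq C_{\beta_0}\,n$.

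To prove the reverse H\"older estimate I pass to the sub-level sets $K_s = \{V \leq s\}$ and set $\varphi(s) = |K_s|^{1/n}$. Convexity of the set $\{(x, s) \in \R^{n+1} : V(x) \leq s\}$ together with Brunn's theorem shows $\varphi$ is concave on $[1, \infty)$; from this one deduces that $w(s) := \varphi(s)^n/(s-1)^n$ is non-increasing (using $\varphi(s) - \varphi'(s)(s-1) \geq \varphi(1) \geq 0$ for concave $\varphi$). A layer-cake computation rewrites the two integrals as
$$\int V^{-\beta}\,dx\,=\,\beta\int_1^\infty \varphi(s)^n\,s^{-\beta-1}\,ds\,=\,1, \qquad \int V^{1-\beta}\,dx\,=\,(\beta - 1)\int_1^\infty \varphi(s)^n\,s^{-\beta}\,ds,$$
so the claim reduces to $\E_\nu[s] \leq \beta/(\beta - n - 1)$ for the probability measure $d\nu \propto \varphi^n s^{-\beta-1}\,ds$. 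Rewriting $\varphi^n s^{-\beta-1} = w(s)(s-1)^n s^{-\beta-1}$ and applying Chebyshev's correlation inequality to the oppositely monotone pair $(w,\,s)$ against the auxiliary probability measure $\nu^* \propto (s-1)^n s^{-\beta-1}\,ds$ yields $\E_\nu[s] \leq \E_{\nu^*}[s]$, and $\E_{\nu^*}[s]$ is a beta-function ratio computing exactly to $B(n+1,\beta-n-1)/B(n+1,\beta-n) = \beta/(\beta - n - 1)$.

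The main obstacle I anticipate is ensuring the constant $C_{\beta_0}$ genuinely depends only on $\beta_0$: the argument above delivers $h(X)/n \leq \beta/(\beta - n - 1)$, which is bounded by $\beta_0/((\beta_0 - 1) - 1/n)$ once $\beta \geq \beta_0 n$ and becomes $\beta_0$-only in the regime of moderately large $n$; the finitely many small-dimensional cases (for which the hypothesis $\beta \geq n+1$ is the binding one) must be absorbed into a slightly larger $C_{\beta_0}$. Conceptually the Brunn--Chebyshev reduction is the delicate step: Brunn gives the qualitative concavity of $\varphi$ for free, but arranging the weights so that Chebyshev's inequality delivers the \emph{sharp} prefactor $\beta/(\beta - n - 1)$ --- rather than merely an order-of-magnitude estimate --- is what makes the final bound quantitatively tight.
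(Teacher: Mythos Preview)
The paper does not give a proof of this proposition in the text; it is quoted from \cite{BM11:it}, where the extremal role of the multidimensional Pareto distributions is established. So there is no in-paper argument to compare against. Your strategy---normalize to $\|f\|=1$ and $\min V=1$, write $h(X)=\beta\,\E[\log V(X)]$, pass to the sublevel-volume function $\varphi(s)=|\{V\le s\}|^{1/n}$ via the layer-cake formula, use Brunn--Minkowski concavity of $\varphi$ to get that $w(s)=\varphi(s)^n/(s-1)^n$ is non-increasing, and finish with Chebyshev's correlation inequality against the reference measure $d\nu^*\propto (s-1)^n s^{-\beta-1}\,ds$---is correct and indeed recovers the Pareto extremizer (for which $w$ is constant).

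There is, however, a genuine gap in the final step. Your bound $h(X)/n\le \beta/(\beta-n-1)$ comes from the crude estimate $\log V\le V-1$ followed by Chebyshev applied to $s$, and it equals $+\infty$ at $\beta=n+1$. The hypothesis \emph{does} permit $\beta=n+1$ whenever $n<1/(\beta_0-1)$. Your remark that ``the finitely many small-dimensional cases \dots\ must be absorbed into a slightly larger $C_{\beta_0}$'' does not rescue this: for each such fixed $n$ the supremum of $\beta/(\beta-n-1)$ over $\beta\in[n+1,\infty)$ is infinite, so there is nothing finite to absorb.

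The fix is minor and actually sharpens your argument: drop the step $\log s\le s-1$ and apply Chebyshev directly with the increasing function $\log s$. A short computation gives $h(X)=-1+\beta\,\E_\nu[\log s]$, and Chebyshev yields $\E_\nu[\log s]\le\E_{\nu^*}[\log s]=\psi(\beta+1)-\psi(\beta-n)=\sum_{j=0}^{n}\frac{1}{\beta-j}$, whence
\[
\frac{h(X)}{n}\ \le\ \frac{1}{n}\sum_{j=1}^{n}\frac{\beta}{\beta-j}.
\]
Each term $\beta/(\beta-j)$ is decreasing in $\beta$, so for $\beta\ge\beta_0 n$ the right side is at most $\beta_0/(\beta_0-1)$, while for $\beta\ge n+1$ it is at most $\frac{n+1}{n}H_n$; since only finitely many $n$ (namely $n<1/(\beta_0-1)$) fall outside the first regime, the overall bound is a finite constant depending only on~$\beta_0$.
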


The left inequality is general: It trivially holds without any convexity
assumption. The right inequality is  an asymptotic version of a result from \cite{BM11:it}
about extremal role of the multidimensional Pareto distributions.

Let us mention three immediate consequences of Proposition~\ref{prop:cvx-maxnorm}.
The first is the specialization to log-concave measures.


\begin{corollary}\label{cor:lc-maxnorm}
If a random vector $X$ in $\R^n$ has 
an absolutely continuous log-concave distribution with density $f$, then
$$
\log\, \|f\|^{-1/n} \leq\, \frac{1}{n}\, h(X)  \leq\, 
1 + \log\, \|f\|^{-1/n}.
$$
\end{corollary}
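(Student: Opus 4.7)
The left inequality is trivial and requires no convexity assumption: since $f \leq \|f\|$ almost everywhere, $-\log f \geq -\log \|f\|$ pointwise, and integrating against $f\,dx$ gives $h(X) \geq \log \|f\|^{-1}$. The substance of the corollary is therefore the upper bound $h(X) \leq n + \log \|f\|^{-1}$.

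For this, I normalize so that $\|f\| = 1$ (using the invariance $h(cX) = h(X) + n \log c$ together with the corresponding rescaling of $\|f\|$) and translate so that the minimum of $W := -\log f$ sits at the origin; after these affine reductions the claim becomes $\E W(X) \leq n$, where $W$ is a convex function on $\R^n$ satisfying $\min W = W(0) = 0$ and $\int e^{-W}\,dx = 1$. The key structural input is Brunn--Minkowski applied to the convex sublevel sets $K_t := \{W \leq t\}$: setting $\phi(t) := |K_t|$, the function $t \mapsto \phi(t)^{1/n}$ is concave on $[0,\infty)$ with $\phi(0) = 0$, equivalently $\rho(t) := \phi(t)/t^n$ is non-increasing on $(0,\infty)$.

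Two applications of the coarea identity together with an integration by parts (the boundary terms vanish thanks to the polynomial growth $\phi(t) = O(t^n)$) convert the problem into a one-dimensional statement:
\[
\int e^{-W}\,dx \;=\; \int_0^\infty e^{-t}\,\phi(t)\,dt \;=\; 1, \qquad
\int W e^{-W}\,dx \;=\; \int_0^\infty t\, e^{-t}\,\phi(t)\,dt \,-\, 1,
\]
so the goal reduces to showing $\int_0^\infty t\, e^{-t}\,\phi(t)\,dt \leq n+1$. Writing $\phi(t) = t^n \rho(t)$, this is equivalent to $\int_0^\infty (t - (n+1))\, t^n e^{-t}\,\rho(t)\,dt \leq 0$, where $\rho$ is non-increasing.

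The remaining step is a simple monotone-rearrangement observation. The function $h(t) := (t-(n+1))\, t^n e^{-t}$ is negative on $(0, n+1)$, positive on $(n+1,\infty)$, and satisfies $\int_0^\infty h\,dt = 0$ (just the statement that the mean of a $\Gamma(n+1,1)$ distribution equals $n+1$). Since $\rho(t) \geq \rho(n+1)$ on $(0, n+1)$ and $\rho(t) \leq \rho(n+1)$ on $(n+1,\infty)$, in both regions one has $\rho(t)\, h(t) \leq \rho(n+1)\, h(t)$, whence $\int_0^\infty \rho(t)\, h(t)\,dt \leq \rho(n+1) \int_0^\infty h(t)\,dt = 0$. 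The extremal case $\rho \equiv \text{const}$ corresponds to $W$ being a norm (positively homogeneous of degree one), which is the limiting Pareto/homogeneous profile already seen in Section~\ref{sec:ent-vol}. The only real obstacle is purely technical bookkeeping: handling degeneracies of $W$ (non-unique minima, affine null directions in the effective domain) and rigorously justifying the boundary terms in the integration by parts. These are handled by routine regularization, for instance replacing $W$ with $W + \varepsilon\,|x|^2$ to obtain a smooth strictly convex function with quadratic growth, and then letting $\varepsilon \to 0$ using continuity of entropy along this perturbation.
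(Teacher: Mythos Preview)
Your argument is correct and complete. One tiny caveat: the claim $\phi(0)=0$ fails when the maximum of $f$ is attained on a set of positive measure (e.g.\ the uniform density on a convex body), but this does not matter for your proof, since the boundary term $t e^{-t}\phi(t)\big|_{t=0}$ vanishes regardless, and the monotonicity of $\rho(t)=\phi(t)/t^n$ follows already from concavity of $\phi^{1/n}$ together with $\phi(0)\ge 0$ (or, even more directly, from the inclusion $\tfrac{t}{s}K_s \subset K_t$ for $0<t<s$, which needs only convexity of $W$ with $W(0)=0$ and not Brunn--Minkowski).

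The paper itself does not supply an argument for Corollary~\ref{cor:lc-maxnorm}; it is stated as the log-concave specialization of Proposition~\ref{prop:cvx-maxnorm}, which is quoted from \cite{BM11:it}. The machinery developed in Section~\ref{sec:ent-vol} for the compactly supported ($\kappa>0$) case proceeds via Berwald's reverse H\"older inequality---one differentiates the $L^p$--$L^q$ comparison at $q=1$ to extract an entropy bound---and the general convex-measure case in \cite{BM11:it} identifies Pareto-type densities as extremizers. Your route is different and more elementary: you bypass moment inequalities entirely, reduce via the layer-cake representation to a one-dimensional weighted inequality with a decreasing weight $\rho$, and conclude by the single-crossing/Chebyshev observation that $\int h\rho\le \rho(n{+}1)\int h=0$. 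This is shorter, self-contained, and makes the equality case (homogeneous $W$, i.e.\ the product of one-dimensional exponentials after a linear change of variables) transparent. The Berwald approach, on the other hand, is what naturally extends to the full family of $\kappa$-concave densities with $\kappa\neq 0$, where the extremizers are no longer homogeneous; your argument is tailored to the log-concave endpoint.
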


The right inequality is attained for the $n$-dimensional 
exponential distribution (with any parameter $\lambda>0$). This measure is 
concentrated on the positive orthant and has there density 
$f(x) = \lambda^n e^{-\lambda (x_1 + \dots + x_n)}$, $x_i > 0$.

%
Corollary~\ref{cor:lc-maxnorm} was observed by the first-named author around the year 2000 (motivated by relating the maximum 
of the density to the subgaussian norm). This observation was discussed with a few scholars but not published
and consequently was not widely known.
Independently, K.~Ball  observed this connection between $\|f\|$ and the entropy of $f$ 
for centrally symmetric, log-concave densities,
and publicized it in various lectures in 2003--06. He also proposed a program for 
approaching the hyperplane conjecture using this connection.  
Corollary~\ref{cor:lc-maxnorm} seems to have become well known
(to experts) soon after-- for example, it is implicit in the last part of the proof of
Theorem 7 of Fradelizi and Meyer \cite{FM08:1}, who showed the non-symmetric extension
using work of Fradelizi \cite{Fra97}. 
Unaware of parts of this history,  the authors in \cite{BM11:it} first explicitly wrote down Corollary~\ref{cor:lc-maxnorm} 
in the form given above.

It was observed in \cite{BM11:it} that Corollary~\ref{cor:lc-maxnorm} 
can be written as a Gaussian comparison inequality.
Specifically, for any log-concave density $f$, we have
\be\label{eq:bm11-d}
-\frac{1}{2} \,\leq \, \frac{1}{n}\, h(Z) - \frac{1}{n}\, h(X) \, 
\leq \,\frac{1}{2},
\en
where $Z$ is any Gaussian random vector in $\R^n$ with the same maximal 
value of the density as $f$. On the other hand, if we replace the assumption
about the maximum with the requirement that $Z$ has the same covariance 
matrix as $X$, one may consider a different inequality of a similar form
$$
0 \,\leq \, \frac{1}{n}\, h(Z) - \frac{1}{n}\, h(X) \, 
\leq \,C.
$$
Whether or not it is possible to choose here an absolute constant $C$
(to serve the class of all log-concave densities) represents a question
equivalent to the hyperplane conjecture (cf. \cite{BM11:it} for discussion, although
the idea of such an equivalence should be credited to K.~Ball as mentioned above).
Let us also note that the dimension-free Gaussian comparison inequality \eqref{eq:bm11-d}
is similar in spirit to the main result of Section~\ref{sec:ent-vol}. 
Specifically, if for $\k>0$, $f$ is a density of a $\k$-concave random vector $X$ taking values in the 
convex body $A$, and if $\text{U}_A$ is the uniform distribution on $A$,
\eqref{eq:3ub}--\eqref{eq:3lb} are equivalent to the statement
$$
0 \,\leq\, 
\frac{1}{n}\,  h(\text{U}_A) - \frac{1}{n}\, h(X) \,\leq \, C.
$$


We proceed to describe two further consequences of Proposition~\ref{prop:cvx-maxnorm}.

\begin{corollary}\label{cor:innerprod}
If random vectors $X$ and $Y$ in $\R^n$ are
independent and have symmetric log-concave densities $f$ and $g$, respectively, 
then
$$
\bigg(\int f(x) g(x)\,dx\bigg)^{\!-2/n} \leq \, H(X+Y) \, \leq \,
e^2 \bigg(\int f(x) g(x)\,dx\bigg)^{\!-2/n}.
$$
\end{corollary}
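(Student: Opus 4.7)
The plan is to combine three simple ingredients: the fact that the convolution of symmetric log-concave densities is again symmetric and log-concave, an explicit identification of the maximum of such a convolution with the inner product $\int fg$, and Corollary~\ref{cor:lc-maxnorm} applied to the density of $X+Y$.

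First, since $f$ and $g$ are log-concave, so is their convolution $f*g$ (Pr\'ekopa's theorem). Symmetry about the origin is preserved under convolution, so $f*g$ is a symmetric log-concave density. A symmetric log-concave function on $\R^n$ attains its maximum at its center of symmetry, hence
$$
\|f*g\|_\infty \,=\, (f*g)(0) \,=\, \int f(-y)\, g(y)\,dy \,=\, \int f(y)\, g(y)\,dy,
$$
where the last equality uses $f(-y)=f(y)$.

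Next, since $X+Y$ has the log-concave density $f*g$, Corollary~\ref{cor:lc-maxnorm} applies and gives
$$
\log \|f*g\|_\infty^{-1/n} \,\leq\, \tfrac{1}{n}\, h(X+Y) \,\leq\, 1 + \log \|f*g\|_\infty^{-1/n}.
$$
Substituting the identity $\|f*g\|_\infty = \int f(x)g(x)\,dx$ and multiplying by $2$, then exponentiating, converts these bounds into the stated inequality for $H(X+Y)=e^{2h(X+Y)/n}$:
$$
\bigg(\int f(x)g(x)\,dx\bigg)^{\!-2/n} \,\leq\, H(X+Y) \,\leq\, e^{2} \bigg(\int f(x)g(x)\,dx\bigg)^{\!-2/n}.
$$

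There is no real obstacle here; the only subtle point is recognizing that symmetric log-concavity of $f*g$ lets one evaluate the sup-norm exactly as a value at the origin, which then matches the inner product by symmetry of $f$. Once this observation is made, Corollary~\ref{cor:lc-maxnorm} does all the work.
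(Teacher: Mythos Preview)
Your proof is correct and follows essentially the same approach as the paper: observe that $f*g$ is symmetric and log-concave, so its maximum is attained at the origin and equals $\int f(x)g(x)\,dx$ by symmetry of $f$, and then apply Corollary~\ref{cor:lc-maxnorm} to the density of $X+Y$.
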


Note that, by the symmetry assumption, the convolution
$f*g(x) = \int f(x-y)g(y)\,dy$ represents a symmetric log-concave density.
Hence, it attains maximum at the origin, so that
$$
\|f*g\| = f*g(0) = \int f(x) g(x)\,dx.
$$

Now, returning to the convex body case, let us combine Proposition~\ref{prop:vol-ent} 
with Corollary~\ref{cor:lc-maxnorm} applied to $X=S_m$.

\begin{corollary}\label{cor:vol-maxnorm}
Let $X_1,\dots,X_m$ be independent and uniformly 
distributed in convex bodies $A_1,\dots,A_m$ in $\R^n$, and let $f_m$ be
the density of the sum $S_m = X_1 + \dots + X_m$. Then
\be\label{eq:vol-maxnorm}
1\leq \|f_m\| \cdot |A_1 + \dots + A_m| \leq (me)^n.
\en
\end{corollary}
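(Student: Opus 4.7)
The plan is to combine the two pillars just established: Proposition~\ref{prop:vol-ent}, which controls the entropy of $S_m$ from below by the log-volume of the Minkowski sum, and Corollary~\ref{cor:lc-maxnorm}, which for a log-concave density pins $h$ to within an additive $n$ of $\log\|f\|^{-1}$. Corollary~\ref{cor:convo} guarantees that $S_m$ is $\frac{1}{mn}$-concave and hence in particular log-concave, so Corollary~\ref{cor:lc-maxnorm} applies to $S_m$ directly. The inclusion of the support of $S_m$ in $A_1+\dots+A_m$, also from Corollary~\ref{cor:convo}, is what gives the lower bound essentially for free.

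\medskip

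More precisely, I would proceed as follows. First, for the left inequality in \eqref{eq:vol-maxnorm}, observe that by Corollary~\ref{cor:convo} the density $f_m$ is supported in $A_1+\dots+A_m$, so
\[
1 \;=\; \int f_m(x)\,dx \;\leq\; \|f_m\|\cdot |A_1+\dots+A_m|,
\]
which is the trivial bound \eqref{eq:3ub} in disguise. For the right inequality, apply Corollary~\ref{cor:lc-maxnorm} (legitimate since $\frac{1}{mn}$-concavity implies log-concavity) to $S_m$:
\[
h(S_m) \;\leq\; n \;+\; \log \|f_m\|^{-1}.
\]
Now invoke the lower bound of Proposition~\ref{prop:vol-ent}:
\[
\log|A_1+\dots+A_m| - n\log m \;\leq\; h(S_m).
\]
Chaining these two inequalities yields
\[
\log|A_1+\dots+A_m| - n\log m \;\leq\; n + \log\|f_m\|^{-1},
\]
which rearranges to $\|f_m\|\cdot|A_1+\dots+A_m|\leq (me)^n$.

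\medskip

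There is no real obstacle here: both ingredients have been set up exactly for this kind of application, and the only mild point to verify is that Corollary~\ref{cor:lc-maxnorm} is applicable, which follows from the monotonicity of $\kappa$-concavity in $\kappa$ (so $\tfrac{1}{mn}$-concave $\Rightarrow$ log-concave). The corollary then packages the content of the earlier entropy/volume estimates in a clean volumetric form that will be convenient when we later feed it into the proof of the reverse entropy power inequality.
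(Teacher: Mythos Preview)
Your proof is correct and follows exactly the approach indicated in the paper: combine Proposition~\ref{prop:vol-ent} with Corollary~\ref{cor:lc-maxnorm} applied to $X=S_m$, using Corollary~\ref{cor:convo} to justify that $S_m$ is log-concave. Your derivation of the lower bound directly from the support inclusion is just the trivial part of those two results unpacked, so there is no substantive difference.
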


To illustrate possible implications, again assume we have two 
convex bodies $A$ and $B$ in $\R^n$, and let $X,Y$ be independent and 
uniformly distributed in $A$ and $-B$, respectively, that is, with densities
$f(x) = \frac{1}{|A|}\, 1_A(x)$, $g(x) = \frac{1}{|B|}\, 1_B(-x)$.
Their convolution
$$
f * g(x) = \frac{1}{|A|\, |B|} \int 1_A(x-y)\, 1_B(-y)\,dy =
\frac{|(A-x) \cap B|}{|A|\, |B|}
$$ 
is supported on $\Omega = A-B$, and \eqref{eq:vol-maxnorm} yields
$$
\sup_{x}
|(A-x) \cap B| \cdot |A-B| \leq\, (2e)^n\, |A|\,|B|.
$$
In fact, by a more careful application of Berwald's inequality (see \cite{BM10:norm} for details), 
the constant here may be slightly improved to get
\be\label{eq:diffbody}
\sup_{x} |(A-x) \cap B| \cdot |A-B| \leq C_{2n}^n\, |A|\,|B|.
\en
This inequality is known as the  Rogers-Shephard inequality \cite[Equation 14]{RS58:1}.
When $A=B$, and taking $x=0$, it yields the Rogers-Shephard difference 
body inequality $|A-A| \leq C_{2n}^n\, |A|$, with the sharp dimensional
constant \cite{RS57}. 

Note also that, since $C_{2n}^n < 4^n$, both the sides of \eqref{eq:diffbody} 
are of a similar order in the sense that
\be\label{eq:diffbody2}
|A|^{1/n}\,|B|^{1/n}\, \leq\, \sup_{x} |(A-x) \cap B|^{1/n}\, |A-B|^{1/n} \,
\leq\, 4\, |A|^{1/n}\,|B|^{1/n}.
\en
Here the left inequality is just the bound
$\|f*g\| \geq |\Omega|^{-1} \int f*g(x)\,dx = |A-B|^{-1}$.

In particular, for all symmetric convex bodies $A$ and $B$ in $\R^n$,
\be\label{eq:symmsum}
|A|^{1/n} \ |B|^{1/n} \leq |A \cap B|^{1/n}\, |A+B|^{1/n} \leq
4\,|A|^{1/n} \ |B|^{1/n}.
\en


\section{Essential support of convex measures}
\label{sec:supp}
\setcounter{equation}{0}

Although log-concave and more general convex measures on $\R^n$ do not have
bounded supports, it is important to find a suitable form of Proposition~\ref{prop:cvx-ent}
and its Corollary~\ref{cor:cvx-ent2} which give bounds on the entropy for compactly
supported convex measures. As it turns out, an ``essential'' part of
any convex measure is supported on a certain convex body, and moreover its 
volume may be related to the entropy of the measure. For the class of 
log-concave probability measures an observation of this concentration type 
was first made by B. Klartag and V. D. Milman in \cite{KM05}, who proved the 
following statement (cf. \cite[Corollary 2.4]{KM05} or \cite[Corollary 5.1]{Kla07:1}).

\begin{proposition}\label{prop:lc-aep}
For any log-concave probability measure
$\mu$ on $\R^n$ with density $f$,
$$
\mu\big\{f \geq c_0^n\, \|f\|\big\} \geq 1-c_1^n
$$
with some universal constants $c_0,c_1 \in (0,1)$.
\end{proposition}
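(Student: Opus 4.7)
My plan is to push $\mu$ forward along the convex function $\phi(x)=-\log(f(x)/\|f\|)\ge 0$ and reduce the statement to a one-dimensional tail bound of gamma type, using Brunn--Minkowski to control the volume profile of the sub-level sets.

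First I would study $W(s)=|\{\phi\le s\}|$, $s\ge 0$. Convexity of $\phi$ yields the inclusion $\alpha\{\phi\le s_1\}+(1-\alpha)\{\phi\le s_2\}\subseteq\{\phi\le \alpha s_1+(1-\alpha)s_2\}$, so by Brunn--Minkowski the function $\psi:=W^{1/n}$ is concave and non-decreasing on $[0,\infty)$, with $\psi(0)=0$ (after removing a measure-zero flat top on $\{f=\|f\|\}$ by approximation). Since $\psi$ is concave through the origin, $\psi(u)/u$ is non-increasing, and this gives the two-sided scaling relation
\[
W(u)\ge (u/s)^n W(s)\ \ (u\le s),\qquad W(u)\le (u/s)^n W(s)\ \ (u\ge s).
\]
Meanwhile, the layer-cake identity applied to $e^{-\phi}$ produces the normalization $\|f\|^{-1}=\int_0^\infty W(u)\,e^{-u}\,du$ and the clean tail formula
\[
\mu\{\phi>s\}\;=\;\|f\|\int_s^\infty\bigl(W(u)-W(s)\bigr)e^{-u}\,du.
\]

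Next I would fix the threshold $s=2n$ and combine the two scaling bounds. The lower bound on $W$ inside the normalization integral yields, after using $\int_0^{2n}u^n e^{-u}\,du\ge n!/2$ (this integral equals $n!\,\P(T\le 2n)$ for $T$ a sum of $n+1$ unit exponentials with mean $n+1<2n$, a one-line Chernoff or median bound), together with Stirling, the estimate $\|f\|W(2n)\le C(2e)^n/\sqrt n$. Inserting the upper bound on $W$ into the tail formula and invoking the standard inequality $\int_{2n}^\infty u^n e^{-u}\,du\le 2(2n)^n e^{-2n}$ (which follows from $\Gamma(n+1,s)\le s^{n+1}e^{-s}/(s-n)$), I obtain
\[
\mu\{\phi>2n\}\;\le\;2\|f\|W(2n)\,e^{-2n}\;\le\;\frac{C'(2/e)^n}{\sqrt n},
\]
which is below $c_1^n$ for any $c_1\in(2/e,1)$ once $n$ is large; small $n$ can be absorbed into the constants. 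Translating back, this gives $\mu\{f\ge e^{-2n}\|f\|\}\ge 1-c_1^n$, so one may take $c_0=e^{-2}$ and $c_1$ slightly above $2/e$.

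The main obstacle is the quantitative balancing inside the scaling $s=Kn$: the normalization-based bound on $\|f\|W(Kn)$ grows like $(Ke)^n$ while the tail has suppression $e^{-Kn}$, and their product $(Ke^{1-K})^n$ is geometric-in-$n$ only when $K>1$, with $K=2$ a convenient clean point. A useful sanity check is that the extremal profile $\psi(s)=as$ in the argument corresponds precisely to the exponential product density $f(x)\propto e^{-(x_1+\cdots+x_n)}$ on $\R_+^n$ (for which $\phi(X)$ is exactly gamma-distributed of shape $n$), confirming that the method is tight up to polynomial factors and that no rate faster than geometric-in-$n$ is possible in this class.
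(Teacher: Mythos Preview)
Your proof is correct and takes essentially the same route as the paper's (Remark~\ref{rmk:lc-aep}): both exploit Brunn--Minkowski to get concavity of $s\mapsto|\{-\log(f/\|f\|)\le s\}|^{1/n}$, derive two-sided scaling bounds from this concavity, combine them with the layer-cake normalization, and reduce to a gamma-type tail estimate at a threshold proportional to $n$. Your direct treatment of the log-concave case (the paper instead obtains it as a $\beta\to\infty$ limit of its argument for general convex measures) yields the sharper constant $c_0=e^{-2}$ versus the paper's $c_0=e^{-8}$, though as you note the choice $s=2n$ needs a small adjustment (e.g.\ $s=3n$) to make the numerical bound fall below $1$ at $n=1$.
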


In fact, at the expense of $c_0$ one may choose $c_1$ to be as small as we 
wish. See also \cite{BM11:aop} for refinements.

Our next step is to prove the following analogue of Proposition~\ref{prop:lc-aep}
for the class of convex measures.

\begin{proposition}\label{prop:cvx-aep}
Let $\mu$ be a probability measure on $\R^n$ with 
density $f = V^{-\beta}$, where $V$ is a convex function
on the supporting set. If $\beta \geq n+1$ and $\beta \geq \beta_0 n$ 
with $\beta_0 > 1$, then
\be\label{eq:cvx-aep}
\mu\big\{f \geq c_0^n\,\|f\|\big\} \geq \frac{1}{2},
\en
for some $c_0 \in (0,1)$ depending on $\beta_0$, only.
\end{proposition}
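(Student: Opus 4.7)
The plan is to observe that Proposition~\ref{prop:cvx-maxnorm} already does almost all of the work, and the desired concentration statement follows by a one-line Markov inequality argument applied to the nonnegative random variable $-\log f(X) - \log \|f\|^{-1}$.

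First I would rewrite the two key quantities in a convenient form. Since $h(X)=\E[-\log f(X)]$ and $\log f(X) \leq \log\|f\|$ almost surely, the variable
\[
W \;=\; \log\|f\| - \log f(X) \;=\; -\log\!\bigl(f(X)/\|f\|\bigr)
\]
is almost surely nonnegative, and its mean equals $h(X)+\log\|f\|$. The upper bound in Proposition~\ref{prop:cvx-maxnorm}, written as
\[
\frac{1}{n}\,h(X) \;\leq\; C_{\beta_0} + \log\|f\|^{-1/n},
\]
is exactly the assertion that $\E[W] = h(X)+\log\|f\| \leq n\,C_{\beta_0}$.

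Next I would apply Markov's inequality to $W$. This gives
\[
\mu\bigl\{W > 2n\,C_{\beta_0}\bigr\} \;\leq\; \frac{\E[W]}{2n\,C_{\beta_0}} \;\leq\; \frac{1}{2},
\]
so that with probability at least $1/2$ one has $W \leq 2n\,C_{\beta_0}$, i.e.\ $f(X) \geq e^{-2nC_{\beta_0}}\|f\|$. Setting $c_0 = e^{-2C_{\beta_0}} \in (0,1)$, this is precisely \eqref{eq:cvx-aep}. The constant $c_0$ depends only on $C_{\beta_0}$, which in turn depends only on $\beta_0$, as required.

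In this outline there is no genuine obstacle: the hypotheses $\beta \geq n+1$ and $\beta \geq \beta_0 n$ are exactly what is needed to invoke Proposition~\ref{prop:cvx-maxnorm}, and nothing beyond Markov's inequality is used. The only minor point worth being careful about is ensuring that $\|f\|$ is finite (so that $W$ is well defined almost surely), which is guaranteed by the boundedness of densities of convex measures recalled at the start of Section~\ref{sec:ent-max}. One could sharpen the probability $1/2$ to $1-\varepsilon$ at the cost of replacing $c_0$ by $c_0(\varepsilon,\beta_0)$ via the same Markov argument, paralleling the remark after Proposition~\ref{prop:lc-aep} for the log-concave case.
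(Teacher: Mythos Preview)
Your argument is correct, and it is a genuinely different route from the paper's. You treat Proposition~\ref{prop:cvx-maxnorm} as a black box: it bounds the mean of the nonnegative variable $W=-\log(f(X)/\|f\|)$ by $nC_{\beta_0}$, and then Markov's inequality immediately gives $\mu\{W\le 2nC_{\beta_0}\}\ge 1/2$, which is exactly \eqref{eq:cvx-aep} with $c_0=e^{-2C_{\beta_0}}$. Nothing more is needed, and the remark that one can upgrade $1/2$ to $1-\varepsilon$ at the cost of the constant is also correct.

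The paper instead gives a self-contained geometric proof that does \emph{not} invoke Proposition~\ref{prop:cvx-maxnorm}: it studies the sublevel sets $A'(t)=\{V<1+t\}$, uses Brunn--Minkowski to show $|A'(t)|^{1/n}$ is concave, and then reduces the tail estimate to an inequality for a beta-distributed random variable (equivalently, a comparison of independent gamma variables). This yields the explicit value $c_0=\exp\{-32\beta_0/(\beta_0-1)\}$, and the same machinery is recycled in Remark~\ref{rmk:lc-aep} to recover the stronger log-concave statement (Proposition~\ref{prop:lc-aep}) with an exponentially small failure probability $c_1^n$ rather than the fixed $1/2$ that Markov gives. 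So your approach is far shorter and conceptually cleaner given the entropy bound, while the paper's approach is independent of that bound, produces explicit constants, and yields sharper tail information in the log-concave limit. One minor point: your Markov step divides by $2nC_{\beta_0}$, so you are implicitly using $C_{\beta_0}>0$; this is clear since the class contains non-uniform densities (e.g.\ the exponential example after Corollary~\ref{cor:lc-maxnorm} already forces $C_{\beta_0}\ge 1$).
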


At the expense of the constant $c_0$ the bound $1/2$ on the right side 
of \eqref{eq:cvx-aep} can be replaced with any prescribed number $p \in (0,1)$.
The convex body 
$$
K_f = \big\{x \in \R^n: f(x) \geq c_0^n\,\|f\|\big\}
$$
may be viewed as the ``$\frac{1}{2}$--support'' or ``essential support'' of the 
measure~$\mu$. (The latter interpretation can be better justified by taking  $p$ to be some 
fixed number that is close to 1, but this is not needed for our purposes.)

\begin{proof}
By the Borell characterization theorem (Proposition~\ref{prop:cvx-meas}),
$\mu$ is supported on an open convex set $\Omega$, where $V$ is positive
and convex. Without loss of generality, assume $V$ attains minimum at some
point $x_0 \in \Omega$, and moreover
$$
V(x_0) = \min_{x \in \Omega} V(x) = 1,
$$
which corresponds to $\|f\| = 1$. Introduce sublevel convex sets
$$
A(\lambda) = \{x \in \Omega: f(x) > \lambda\}, \qquad 0 < \lambda < 1,
$$
and similarly
$$
A'(t) = \{x \in \Omega: V(x) < 1+t\}, \qquad t>0.
$$
Thus $A(\lambda) = A'(\lambda^{-1/\beta}-1)$. By the Brunn-Minkowski inequality,
the function $\varphi(t) = |A'(t)|$ is $\frac{1}{n}$--concave in $t>0$,
that is, $\varphi(t) = \psi(t)^n$ for some concave function $\psi$, which
is also non-negative and non-decreasing. We may assume that
$\varphi(0+) = 0$ and similarly for $\psi$. Integrating by parts, we have
$$
\int V(x)^{-\beta}\,dx = \int_0^{+\infty} (1+t)^{-\beta} d\varphi(t)
 = \beta \int_0^{+\infty} (1+t)^{-\beta-1} \varphi(t)\,dt,
$$
that is,
\be\label{eq:5a}
\beta \int_0^{+\infty} (1+t)^{-\beta-1}\,\psi(t)^n\, dt = 1.
\en
Fix $t_0 > 0$ and write similarly
\bee
1 - \mu(A'(t_0)) \ = \
\int_{\{V \geq 1+t_0\}} V(x)^{-\beta}\,dx
                 & = &
\int_{t_0}^{+\infty} (1+t)^{-\beta} d\varphi(t) \\
                 & = &
\beta \int_{t_0}^{+\infty} (1+t)^{-\beta-1}\,\varphi(t)\,dt -
(1+t_0)^{-\beta} \varphi(t_0),
\ene
so,
\be\label{eq:5b}
1 - \mu(A'(t_0))\, \leq\, 
\beta \int_{t_0}^{+\infty} (1+t)^{-\beta-1}\,\psi(t)^n\,dt.
\en

Now, we need to estimate from above the integral \eqref{eq:5b} subject to \eqref{eq:5a}. 
By concavity and monotonicity of $\psi$,
$$
\psi(t) \geq
\begin{cases}
ct, & \text{for} \ \ 0 < t < t_0 \\
ct_0,  & \text{for} \ \ t \geq t_0
\end{cases}
$$
where $c = \psi(t_0)/t_0$. Hence, integrating just over the interval 
$(0,t_0)$, we get
$$
\int_0^{+\infty} (1+t)^{-\beta-1}\,\psi(t)^n\,dt\, \geq\, 
c^n \int_0^{t_0} \frac{t^n}{(1+t)^{\beta + 1}}\, dt\, =\,
c^n \int_{s_0}^1 s^{\beta - n - 1} (1-s)^n\,ds,
$$
where $s_0 = 1/(1+t_0)$ and where we used the substitution $s = 1/(1+t)$.
Hence, by \eqref{eq:5a},
\be\label{eq:5c}
c^n\, \leq \,
\frac{1}{\beta \int_{s_0}^1 s^{\beta - n - 1} (1-s)^n\,ds}.
\en
On the other hand, using $\psi(t) \leq ct$, which holds for all $t>t_0$,
we obtain that
$$
\int_{t_0}^{+\infty} (1+t)^{-\beta-1}\,\psi(t)^n\,dt\, \leq\, 
c^n \int_{t_0}^{+\infty} \frac{t^n}{(1+t)^{\beta + 1}}\, dt\, =\,
c^n \int_0^{s_0} s^{\beta - n - 1} (1-s)^n\,ds.
$$
Combining \eqref{eq:5b} and \eqref{eq:5c}, we get
\be\label{eq:5d}
1 - \mu(A'(t_0))\, \leq\ \frac{\P\{\xi < s_0\}}{\P\{\xi > s_0\}}, \qquad
s_0 = \frac{1}{1+ t_0},
\en
where $\xi$ is a random variable having the beta-distribution
with parameters $(\beta - n,n+1)$, that is, with density
$$
p(s)\, =\, \frac{1}{B(\beta - n,n+1)}\ s^{\beta - n - 1} (1-s)^n, \qquad 
0 < s < 1.
$$

Now, to better understand the expression in \eqref{eq:5d}, it is useful
to relate the beta distribution to the gamma distribution. It is
a well-known fact in probability that in the sense of distributions
$$
\xi = \frac{\Gamma_{\beta - n}}{\Gamma_{\beta - n} + \Gamma_{n+1}},
$$
where $\Gamma_{\beta - n}$ and $\Gamma_{n+1}$ are independent random variables, 
having the gamma distribution with shape parameters $\beta - n$ and $n+1$ 
respectively (and with the scale parameter~1). In particular, one may write
$\Gamma_{n+1} = \zeta_1 + \dots + \zeta_{n+1}$, where the $\zeta_i$'s are
independent and have a standard exponential distribution.

Note that the inequality 
$\xi < s_0$ is solved as $\Gamma_{n+1} > t_0\, \Gamma_{\beta - n}$. 
Consequently, \eqref{eq:5d} takes the form
\be\label{eq:5e}
1 - \mu(A'(t_0))\, \leq\, 
\frac{\P\{\Gamma_{n+1} > t_0\,
\Gamma_{\beta - n}\}}{\P\{\Gamma_{n+1} < t_0\,\Gamma_{\beta - n}\}}.
\en

Using Chebyshev's inequality, for any $\alpha > 1$ and $s \in (0,1)$, 
and actually with optimal $s = 1 - 1/\alpha$, one may write
$$
\P\{\Gamma_{n+1} > \alpha (n+1)\} \leq 
(\E e^{s\zeta_1})^{n+1}\, e^{-\alpha s (n+1)} =
\bigg(\frac{e^{-\alpha s}}{1-s}\bigg)^{n+1} =
\left(e \cdot \alpha e^{-\alpha}\right)^{n+1}.
$$
Take, for example, $\alpha = 4$, in which case the above gives
\be\label{eq:5f}
\P\{\Gamma_{n+1} > 4 (n+1)\}\, \leq\, \bigg(\frac{4}{e^3}\bigg)^{n+1} < \,
\bigg(\frac{1}{5}\bigg)^{n+1}.
\en
Hence,
\bee
\P\{\Gamma_{n+1} > t_0\,\Gamma_{\beta - n}\} & = & 
\P\{\Gamma_{n+1} > t_0\,\Gamma_{\beta - n}, \ \Gamma_{n+1} > 4 (n+1)\} \\
 & & \hskip-4mm + \,
\P\{\Gamma_{n+1} > t_0\,\Gamma_{\beta - n}, \ \Gamma_{n+1} < 4 (n+1)\} \\ 
               & < &
5^{-(n+1)} + \P\big\{\Gamma_{\beta - n} < \frac{4 (n+1)}{t_0}\big\}.
\ene
In terms of $t_0 = \frac{4 (n+1)}{T}$, where $T > 0$ will be choosen later on,
we thus obtain that
\be\label{eq:5g}
\P\{\Gamma_{n+1} > t_0\,\Gamma_{\beta - n}\} < 
5^{-(n+1)} + \P\{\Gamma_{\beta - n} < T\}.
\en
Now,
\bee
\P\{\Gamma_{\beta - n} < T\} & = &
\frac{1}{\Gamma(\beta - n)} \int_0^T x^{\beta - n -1}\, e^{-x}\,dx \\
              & < &
\frac{1}{\Gamma(\beta - n)} \int_0^T x^{\beta - n -1}\,dx
              \ = \
\frac{T^{\beta - n}}{\Gamma(\beta - n + 1)} \ = \ \frac{T^\alpha}{\Gamma(\alpha + 1)},
\ene
where we put $\alpha = \beta - n$ (which is positive). Take
$T = \frac{1}{4}\, \E \Gamma_{\beta - n} = \frac{\alpha}{4}$, so that
\be\label{eq:5h}
\P\{\Gamma_{\beta - n} < T\}\ \leq\ 
\frac{(\frac{\alpha}{4})^\alpha}{\Gamma(\alpha + 1)}.
\en

We claim that the right side of \eqref{eq:5h} does not exceed $1/4$ for any
$\alpha \geq 1$. Here we use the following observation.
If $\zeta$ is a random variable with the standard exponential distribution,
then $\E \zeta^\alpha = \Gamma(\alpha + 1)$ and the claim takes the form
\be\label{eq:5i}
h(\alpha) \equiv \log \E \bigg(\frac{\zeta}{\alpha}\bigg)^\alpha \geq 
\log 4 -\alpha \log 4.
\en
But as shown in \cite{Bob03:gafa1}, the function $h$ is always concave on the positive
half-axis $\alpha>0$, whenever $\zeta>0$
has a log-concave distribution. Hence, it is enough to verify \eqref{eq:5i} for
$\alpha = 1$ and $\alpha = +\infty$.  In our particular case, at the left 
endpoint there is equality, while Stirling's formula shows that \eqref{eq:5i} also
holds at infinity.

Thus, $\P\{\Gamma_{\beta - n} < \frac{\beta - n}{4}\} \leq \frac{1}{4}$ 
whenever $\beta -n \geq 1$, and for
$t_0 = \frac{4 (n+1)}{T} = \frac{16 (n+1)}{\beta - n}$ the inequality 
\eqref{eq:5g} yields
$$
\P\{\Gamma_{n+1} > t_0\,\Gamma_{\beta - n}\} < 
5^{-(n+1)} + \frac{1}{4} < \frac{1}{3},
$$
so that by \eqref{eq:5e},
\be\label{eq:5j}
1 - \mu\left(A'(t_0)\right)\, \leq\ 
\frac{\frac{1}{3}}{1 - \frac{1}{3}} = \frac{1}{2}.
\en

Finally, recall that $A(\lambda) = A'(\lambda^{-1/\beta}-1)$ or
$$
A'(t_0) = A(\lambda) \quad {\rm with} \quad \lambda =
\bigg(1 + 16\, \frac{n+1}{\beta - n}\bigg)^{-\beta}.
$$
By \eqref{eq:5j}, for this value we have 
$\mu(A(\lambda)) = \mu\{f > \lambda\} \geq \frac{1}{2}$.
We need an estimate of the form $\lambda \geq c^n$, with some $c > 0$
depending on $\beta$. The latter is equivalent to
\be\label{eq:5k}
\beta \log\bigg(1 + 16\, \frac{n+1}{\beta - n}\bigg) \leq n \log C \quad
(C = 1/c)
\en
which is indeed fulfilled in the range $\beta \geq \beta_0 n$ with
$\beta_0 > 1$ and $C = C(\beta_0)$. However, it is not true
for $\beta = n + O(1)$. 

To simplify \eqref{eq:5k}, one may just use the elementary bound $\log (1+x) \leq x$, 
so that \eqref{eq:5k} would follow from
$$
16 \beta\, \frac{n+1}{\beta - n} \leq n \log C 
$$
which holds for all $\beta \geq \beta_0 n$ with 
$C = \exp\{32 \beta_0/(\beta_0 - 1)\}$. Thus, Proposition~\ref{prop:cvx-aep} is proved with
\be\label{eq:5l}
c_0 = \exp\{-32 \beta_0/(\beta_0 - 1)\}.
\en
\end{proof}

\vskip2mm
\begin{remark}\label{rmk:lc-aep}
A slight modification of the above argument leads to 
Proposition~\ref{prop:lc-aep}. Indeed, let $f$ be a log-concave density such that $\|f\|=1$.
Write once more the inequality \eqref{eq:5e} with $t_0 = t/\beta$, $t>0$, and recall 
the relation $A(\lambda) = A'(\lambda^{-1/\beta}-1)$. Hence, \eqref{eq:5e} takes the form
$$
1 - \mu\left(A\left(\big(1 + t/\beta\big)^{-\beta}\,\right)\right)\, 
\leq\, \frac{\P\{\Gamma_{n+1} > 
t\,\Gamma_{\beta - n}/\beta\}}{\P\{\Gamma_{n+1} < t\,\Gamma_{\beta - n}/\beta\}}.
$$
Letting $\beta \rightarrow +\infty$ and using 
$\Gamma_{\beta - n}/\beta \rightarrow 1$ in probability (according to the weak 
law of large numbers), we arrive in the limit at
$$
1 - \mu\left(A(e^{-t})\right)\, \leq\, 
\frac{\P\{\Gamma_{n+1} > t\}}{\P\{\Gamma_{n+1} < t\}}, 
\qquad t>0.
$$
Choose, for example, $t = 8n \geq 4(n+1)$. Then, by \eqref{eq:5f},
$$
1 - \mu\left(A(e^{-8n})\right)\, <\, \frac{5^{-(n+1)}}{1 - 5^{-(n+1)}} < 
\frac{1}{5^n}.
$$
Hence, Proposition~\ref{prop:lc-aep} holds with $c_0 = e^{-8}$ and $c_1 = 1/5$.
\end{remark}

\begin{remark}\label{rmk:finmeas}
By homogeneity, Propositions~\ref{prop:lc-aep}--\ref{prop:cvx-aep} may be stated for
finite convex measures. In particular, if $f = V^{-\beta}$ is Lebesgue 
integrable, where $V$ is a positive convex function, and $\beta \geq n+1$ 
and $\beta \geq \beta_0 n$ with $\beta_0 > 1$, then
\be\label{eq:5m}
\int f(x)\,dx \leq 2\, \int_{K_f} f(x)\,dx \leq 2\, \|f\|\,|K_f|.
\en
Recall that
$$
K_f = \left\{x \in \Omega: f(x) \geq c_0^n\, \|f\|\right\}
$$
is the essential support of $\mu$ with $c_0$ depending on $\beta_0$, only.
(One may choose the constant \eqref{eq:5l}).
\end{remark}

\vskip2mm
To illustrate how Proposition~\ref{prop:cvx-aep} may be applied, note that
$2\, \|f\|\,|K_f| \geq 1$, according to \eqref{eq:5m}. On the other hand,
since $1 \geq \int_{K_f} f(x)\,dx \geq c_0^n \|f\|\, |K_f|$, 
we have that $\|f\|\,|K_f| \leq c_0^{-n}$. Thus, 
\be\label{eq:5n}
\frac{1}{2}\,\|f\|^{-1/n} \leq \,|K_f|^{1/n} \leq c_0^{-1}\,\|f\|^{-1/n}.
\en 
But by Proposition~\ref{prop:cvx-maxnorm}
, if a random vector $X$ has distribution $\mu$,
$$
1 \leq H(X)\, \|f\|^{2/n} \leq C
$$
with constants $C$, depending on $\beta$, only (in case of the range as in
Proposition~\ref{prop:cvx-aep}). Hence, we arrive at:

\begin{corollary}\label{cor:typset}
Let a random vector $X$ in $\R^n$ have 
a density $f = V^{-\beta}$, where $V$ is a positive convex function
on the supporting set.
If $\beta \geq n+1$ and $\beta \geq \beta_0 n$ with $\beta_0 > 1$, then
$$
C'_{\beta_0}\, |K_f|^{2/n}\, \leq\, H(X)\, \leq\, C''_{\beta_0}\, |K_f|^{2/n},
$$
where $K_f$ is the essential support of the distribution of $X$, and where 
$C''_{\beta_0} > C'_{\beta_0} > 0$ depend on $\beta_0$, only.
\end{corollary}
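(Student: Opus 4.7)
The plan is to observe that this corollary is essentially an algebraic consequence of two prior two-sided estimates: Proposition~\ref{prop:cvx-maxnorm} relates $H(X)$ to $\|f\|$, and Proposition~\ref{prop:cvx-aep} can be leveraged to relate $|K_f|$ to $\|f\|$. Both intermediate relations come with constants depending only on $\beta_0$, so eliminating $\|f\|$ between them delivers a two-sided bound relating $H(X)$ to $|K_f|^{2/n}$ with the required constants.

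First I would establish the two-sided comparison $\frac{1}{2}\|f\|^{-1/n} \leq |K_f|^{1/n} \leq c_0^{-1}\|f\|^{-1/n}$ (this is the chain of inequalities already flagged in \eqref{eq:5n}). The upper bound is immediate from the definition of $K_f$:
\[
1 \ =\ \int f\,dx\ \geq\ \int_{K_f} f\,dx\ \geq\ c_0^n\,\|f\|\cdot|K_f|,
\]
while the lower bound follows from $\mu(K_f) \geq 1/2$ (Proposition~\ref{prop:cvx-aep}) combined with
\[
\tfrac{1}{2}\ \leq\ \int_{K_f} f\,dx\ \leq\ \|f\|\cdot|K_f|.
\]
Raising to the $2/n$ power gives $\tfrac{1}{4}\|f\|^{-2/n} \leq |K_f|^{2/n} \leq c_0^{-2}\|f\|^{-2/n}$, with $c_0 = c_0(\beta_0)$ as in \eqref{eq:5l}.

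Next I would exponentiate Proposition~\ref{prop:cvx-maxnorm} to get
\[
\|f\|^{-2/n}\ \leq\ H(X)\ \leq\ e^{2 C_{\beta_0}}\,\|f\|^{-2/n},
\]
valid in precisely the regime $\beta \geq \max\{\beta_0 n, n+1\}$ assumed in the corollary. Combining with the previous display by eliminating $\|f\|^{-2/n}$, the upper bound on $H(X)$ together with the lower bound on $|K_f|^{2/n}$ yields $H(X) \leq 4 e^{2 C_{\beta_0}}\,|K_f|^{2/n}$, while the lower bound on $H(X)$ together with the upper bound on $|K_f|^{2/n}$ yields $H(X) \geq c_0^2\,|K_f|^{2/n}$. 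Setting $C'_{\beta_0} = c_0^2$ and $C''_{\beta_0} = 4 e^{2 C_{\beta_0}}$ completes the proof.

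The main obstacle in this corollary is already behind us: once Propositions~\ref{prop:cvx-maxnorm} and \ref{prop:cvx-aep} are in hand, the argument is purely formal bookkeeping. The genuine work — the beta/gamma tail estimates and the log-concavity-of-moments trick underlying Proposition~\ref{prop:cvx-aep}, and the Pareto-extremality behind Proposition~\ref{prop:cvx-maxnorm} — has already been carried out in the earlier sections, so no further delicate estimates are required here.
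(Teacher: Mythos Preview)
Your argument is correct and matches the paper's own proof essentially step for step: the paper also first derives the two-sided bound \eqref{eq:5n} from Proposition~\ref{prop:cvx-aep} and then combines it with the exponentiated form of Proposition~\ref{prop:cvx-maxnorm} to eliminate $\|f\|$. Even the constants you obtain coincide with theirs.
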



\section{$M$-position for convex bodies and measures}
\label{sec:m-pos}
\setcounter{equation}{0}

The so-called $M$-position of convex bodies was introduced by V. D. Milman 
in connection with reverse forms of the Brunn-Minkowski inequality, cf. \cite{Mil86}.
By now  several equivalent definitions of this important
concept are known, and for our purposes we choose one of them. We refer an interested
reader to the subsequent works \cite{Mil88:1}, \cite{Mil88:2} and the book by G. Pisier \cite{Pis89:book}, 
which also contains historical remarks; cf. also \cite{Bob11} for the relationship
between $M$-position and isotropicity.

For any convex body $A$ in $\R^n$, define
$$
M(A) = \sup_{|{\cal E}| = |A|} \frac{|A \cap {\cal E}|^{1/n}}{|A|^{1/n}},
$$
where the supremum is over all ellipsoids ${\cal E}$ with volume
$|{\cal E}| = |A|$. The main result of V. D. Milman may be stated as follows: 

\begin{proposition}\label{prop:M}
If $A$ is a symmetric convex body in $\R^n$, 
then with some universal constant $c>0$
\be\label{eq:M-ell}
M(A) \geq c.
\en
\end{proposition}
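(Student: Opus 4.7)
The plan is to deduce \eqref{eq:M-ell} by combining Milman's reverse Brunn-Minkowski inequality \eqref{eq:reverseBM} with the two-sided symmetric intersection-sum inequality \eqref{eq:symmsum} that was derived at the end of Section~\ref{sec:ent-max}. In other words, rather than re-proving the existence of $M$-ellipsoids from scratch, I would show that the particular formulation via $|A\cap \mathcal{E}|$ follows by elementary manipulation from the Minkowski-sum formulation, using only tools already assembled in the paper.

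First, I would let $B$ be a Euclidean ball in $\R^n$ with $|B|=|A|$. By Milman's theorem in the form \eqref{eq:reverseBM}, applied to the pair $(A,B)$, there exist linear volume preserving maps $u_1,u_2$, one of which may be taken to be the identity, such that
\be\label{eq:Mpos-step1}
|A + u_2(B)|^{1/n} \,\leq\, C\big(|A|^{1/n} + |B|^{1/n}\big) \,=\, 2C\, |A|^{1/n}.
\en
Set $\mathcal{E} = u_2(B)$. Since $u_2$ is volume preserving and linear, $\mathcal{E}$ is an ellipsoid (centered at the origin) with $|\mathcal{E}| = |A|$. Thus $\mathcal{E}$ is a legitimate candidate in the supremum defining $M(A)$.

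Second, I would apply the lower half of the symmetric intersection-sum inequality \eqref{eq:symmsum} to the two symmetric convex bodies $A$ and $\mathcal{E}$; both are symmetric, so the hypothesis is verified. This gives
\be\label{eq:Mpos-step2}
|A|^{1/n}\,|\mathcal{E}|^{1/n} \,\leq\, |A\cap\mathcal{E}|^{1/n}\,|A+\mathcal{E}|^{1/n}.
\en
Combining \eqref{eq:Mpos-step1} and \eqref{eq:Mpos-step2} and using $|\mathcal{E}|=|A|$, I obtain
\bee
|A|^{2/n} \,\leq\, |A\cap \mathcal{E}|^{1/n}\,\cdot\, 2C\,|A|^{1/n},
\ene
which rearranges to $|A\cap\mathcal{E}|^{1/n} \geq \frac{1}{2C}\,|A|^{1/n}$. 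Since $\mathcal{E}$ is an ellipsoid of the same volume as $A$, the supremum defining $M(A)$ is at least $\frac{1}{2C}$, so \eqref{eq:M-ell} holds with $c=\frac{1}{2C}$.

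There is no serious obstacle: the whole proposition reduces to chaining the two displayed inequalities. The only point that requires a moment of attention is that \eqref{eq:symmsum} is stated for \emph{symmetric} bodies, which is why it is important that $B$ (and hence $\mathcal{E}=u_2(B)$) be chosen as a centered ball rather than an arbitrary convex body, and that the given $A$ is symmetric. If one wished to drop the symmetry assumption on $A$, one would instead need to apply the (non-symmetric) bound \eqref{eq:diffbody2} to the pair $(A,-\mathcal{E})$ and absorb the resulting difference body into the estimate, at the price of a worse but still absolute constant.
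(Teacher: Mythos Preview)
Your argument is correct, and it coincides with what the paper itself does. Note, however, that the paper does \emph{not} supply an independent proof of Proposition~\ref{prop:M}: it is quoted as Milman's theorem with references to \cite{Mil86,Mil88:2,Mil88:1,Pis89:book}. The derivation you give---combining the reverse Brunn--Minkowski inequality \eqref{eq:reverseBM} with the left half of \eqref{eq:symmsum}---is exactly the ``converse'' direction of the equivalence Corollary in Section~\ref{sec:submod}, where the paper writes that ``the reverse Brunn--Minkowski inequality implies Proposition~\ref{prop:M}'' via the left inequality in \eqref{eq:vol-submod2} (which, for bodies of volume one, is the same as the left inequality in \eqref{eq:symmsum}). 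The only cosmetic difference is that the paper applies the volume-preserving map to $A$ and keeps the ball fixed, whereas you keep $A$ fixed and map the ball to an ellipsoid; since $M(\cdot)$ is invariant under linear volume-preserving maps, the two are interchangeable.

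One caveat worth making explicit: because \eqref{eq:reverseBM} and Proposition~\ref{prop:M} are \emph{equivalent} formulations of Milman's deep result, your argument is a translation between formulations rather than a self-contained proof. That is entirely in keeping with how the paper treats the proposition, but you should not present it as an ab initio proof of the existence of $M$-ellipsoids.
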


By the Brunn-Minkowski and Rogers-Shephard difference body inequalities, 
for any convex body $A$ in $\R^n$, we have $M(A) \geq \frac{1}{2}\, M(A-A)$. 
Hence, the symmetry assumption in \eqref{eq:M-ell} may be removed.
(That this may be done was first noticed by V. Milman and A. Pajor in \cite{MP00},
using a different but equivalent definition of $M$-ellipsoids.)

If $|A \cap {\cal E}|^{1/n} \geq c\, |A|^{1/n}$ with a universal constant 
$c>0$, then ${\cal E}$ is called an $M$-ellipsoid, or Milman's ellipsoid. 
It can be shown with the help of the reverse Santalo inequality
due to Bourgain and Milman and using a bound such as \eqref{eq:symmsum} that, if
${\cal E}$ is a (symmetric) $M$-ellipsoid for a symmetric convex body $A$,
then the dual ellipsoid ${\cal E}^{\rm o}$ is an $M$-ellipsoid for the dual 
body $A^{\rm o}$ (although with different absolute constants).

It follows from the definition that, for any convex body $A$ in 
$\R^n$, one can find an affine volume preserving map $u:\R^n \rightarrow \R^n$
such that $u(A)$ has a multiple of the unit centered Euclidean ball as an 
$M$-ellipsoid. In that case, one says that $u(A)$ is in $M$-position. 
Or equivalently, $A$ is in $M$-position, if
\be\label{eq:M-ell2}
|A \cap D|^{1/n} \geq c\, |A|^{1/n},
\en
where $D$ is a Euclidean ball with center at the origin, 
such that $|D| = |A|$, and where $c>0$ is universal.

The definition of an $M$-position may naturally be extended to the class 
of convex measures. Let $\mu$ be a convex probability measure 
on $\R^n$ with density $f$ such that $\|f\|=1$. Then we say 
that $\mu$ is in $M$-position (with constant $c>0$), if
\be\label{eq:M-cvx}
\mu(D)^{1/n} \geq c,
\en
where $D$ is a Euclidean ball with center at the origin of volume
$|D|=1$. Correspondingly, Proposition~\ref{prop:M} can be generalized to a class
of convex measures.

\begin{proposition}\label{prop:M-cvx}
Let $\mu$ be a probability measure on $\R^n$ 
with density $f = V^{-\beta}$ such that $\|f\| \geq 1$, where $V$ is a convex 
function on the supporting set. If $\beta \geq n+1$ and $\beta \geq \beta_0 n$ 
with $\beta_0 > 1$, then $\mu$ may be put in a position where
$$
\mu(D)^{1/n} \geq c_0
$$
for some $c_0 \in (0,1)$ depending on $\beta_0$
$($where $D$ is the Euclidean ball of volume one$)$.
\end{proposition}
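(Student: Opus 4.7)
The plan is to apply Milman's $M$-ellipsoid theorem (Proposition~\ref{prop:M}) to the convex body given by the essential support of $\mu$ (Proposition~\ref{prop:cvx-aep}), and then transport the resulting geometric information back to a statement about $\mu$ itself.

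First, I invoke Proposition~\ref{prop:cvx-aep} to obtain the convex body $K_f=\{x:f(x)\geq c_\star^n\|f\|\}$, a sublevel set of the convex function $V$, with $\mu(K_f)\geq 1/2$ and constant $c_\star=c_\star(\beta_0)\in(0,1)$. Combining $c_\star^n\|f\|\cdot|K_f|\leq\mu(K_f)\leq 1$ with the estimate $\|f\|\cdot|K_f|\geq 1/2$ from Remark~\ref{rmk:finmeas} and the hypothesis $\|f\|\geq 1$, I get
\be
\frac{1}{2\|f\|}\,\leq\,|K_f|\,\leq\,\frac{c_\star^{-n}}{\|f\|}\,\leq\,c_\star^{-n},
\en
so $K_f$ is a convex body whose volume is bounded by a constant depending only on $\beta_0$.

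Next I apply Proposition~\ref{prop:M} to $K_f$: there is a linear volume-preserving map $u_0$ such that $u_0(K_f)$ is in $M$-position, i.e., $|u_0(K_f)\cap D'|^{1/n}\geq c\,|K_f|^{1/n}$ for some universal $c>0$, where $D'$ is the origin-centered Euclidean ball of volume $|K_f|$. Pushing forward gives $\mu'=(u_0)_*\mu$, whose density inherits the pointwise lower bound $c_\star^n\|f\|$ on $u_0(K_f)$. Writing $r=|K_f|^{1/n}$ and $D'=rD$ (so $r\leq c_\star^{-1}$),
\be
\mu'(D')\,\geq\,c_\star^n\|f\|\cdot|u_0(K_f)\cap D'|\,\geq\,(c_\star c)^n\,\|f\|\cdot|K_f|\,\geq\,(c_\star c)^n/2.
\en
When $r\leq 1$, the ball $D$ contains $D'$, so $\mu'(D)\geq\mu'(D')\geq(c_\star c)^n/2$, which immediately yields $\mu'(D)^{1/n}\geq c_0(\beta_0)$.

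The case $r>1$ is the main obstacle, since then $D'$ is strictly larger than the target ball $D$ and $M$-position does not by itself bound $\mu'(D)$. I plan to handle it with an extra covering and translation step: cover $D'=rD$ by $N\leq(C_1 r)^n\leq(C_1/c_\star)^n$ translates of $D$, for some universal constant $C_1$. Pigeonhole furnishes an index $i_0$ with $\mu'((x_{i_0}+D)\cap D')\geq\mu'(D')/N$, and a further translation by $-x_{i_0}$ (an affine volume-preserving map, which preserves $\|f\|$) yields a measure $\tilde\mu$ with $\tilde\mu(D)\geq(c_\star c)^n/(2N)\geq(c_\star^2 c/C_1)^n/2$, giving the required bound in this case too. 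The final constant $c_0$ is the smaller of the two constants from the two cases, and depends only on $\beta_0$ via $c_\star$.
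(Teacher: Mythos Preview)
Your proof is correct and follows the same overall strategy as the paper: extract the essential support $K_f$ via Proposition~\ref{prop:cvx-aep}, put it in $M$-position via Proposition~\ref{prop:M}, and read off a lower bound on the measure of a ball from the density lower bound on $K_f$. The only real difference lies in how you reconcile the scale of $K_f$ with the unit-volume ball $D$. You keep $K_f$ at its natural size, compare with the ball $D'=rD$ of volume $|K_f|$, and then split into cases $r\le 1$ and $r>1$; in the latter you use a covering/pigeonhole argument plus an extra translation. The paper instead normalizes to $\|f\|=1$ (by homogeneity), so that $|K_f|^{1/n}\in[1/2,c_\star^{-1}]$, rescales to $K_f'=K_f/|K_f|^{1/n}$ of volume one, puts $K_f'$ in $M$-position, and then uses the single inclusion $K_f'\subset 2K_f$ (valid since $|K_f|^{1/n}\ge 1/2$ and $0\in K_f$) together with $2K_f\cap D\subset 2(K_f\cap D)$ to get $|K_f\cap D|\ge 2^{-n}|K_f'\cap D|\ge (c/2)^n$ in one stroke. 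The paper's route is a little slicker and yields a cleaner constant; your route is slightly more flexible in that it never needs the normalization $\|f\|=1$, but is otherwise equivalent.
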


By saying ``put'' we mean that, for some affine volume preserving map 
$u:\R^n \rightarrow \R^n$, the image $u(\mu) = \mu u^{-1}$ of the measure
$\mu$ under the map $u$ is in $M$-position.

In particular, any log-concave probability measure $\mu$ on $\R^n$ 
with density $f$ such that $\|f\|=1$ may be put in $M$-position
with a universal constant.

\begin{proof}
We may assume that $\|f\|=1$.
By Proposition~\ref{prop:cvx-aep}, for some constant $c_0 > 0$, 
which only depends on $\beta_0$, the essential support of $\mu$, i.e.,
the set $K_f = \{f(x) \geq c_0^n\}$ has measure 
$\mu(K_f) \geq 1/2$. Hence, as was already noted in Remark~\ref{rmk:finmeas}, we have
$$
\frac{1}{2} \leq \,|K_f|^{1/n} \leq c_0^{-1}.
$$

Put $K_f' = \frac{1}{|K_f|^{1/n}}\, K_f$, which is a convex body 
with volume $|K_f'|=1$. 

One may assume that $K_f'$ contains the origin and is already in $M$-position (otherwise, apply to $K_f'$ a linear, volume preserving map $u$ to put it 
in $M$-position and 
consider the image $u(\mu)$ in place of $\mu$). 
We claim that if $K_f'$ is in $M$-position, then $\mu$ is also in $M$-position.

Indeed, if $D$ is the Euclidean ball with center at the origin of volume
$|D|=1$, then \eqref{eq:M-ell2} is satisfied for the set $A = K_f'$ with a universal
constant $c>0$.
Since $K_f' \subset 2 K_f$, we have
$|K_f' \cap D| \leq |2K_f \cap D| \leq 2^n |K_f \cap D|$. Therefore,
$$
\mu(D) \geq \int_{K_f \cap D} f(x)\,dx \geq c_0^n\, |K_f \cap D| \geq
c_0^n \cdot 2^{-n} |K_f' \cap D| \geq \bigg(\frac{c_0 c}{2}\bigg)^n.
$$
Proposition~\ref{prop:M-cvx} is proved.
\end{proof}


\section{Submodularity of entropy and implications}
\label{sec:submod}
\setcounter{equation}{0}

In the proof of Theorem~\ref{thm:repi} we apply a general submodularity property of the entropy 
functional, recently obtained in \cite{Mad08:itw}. We state it below in the particular 
case of three random vectors.

\begin{proposition}\label{prop:submod}
Given independent random vectors $X$, $Y$, $Z$
in $\R^n$ with absolutely continuous distributions, we have
$$
h(X+Y+Z) + h(Z) \leq h(X+Z) + h(Y+Z)
$$
provided that all entropies are well-defined.
\end{proposition}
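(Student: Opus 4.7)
The plan is to rewrite the claimed submodularity as a data processing inequality for mutual information. Rearranging, the inequality
$$h(X+Y+Z) + h(Z) \leq h(X+Z) + h(Y+Z)$$
is equivalent to
$$h(X+Y+Z) - h(Y+Z) \leq h(X+Z) - h(Z).$$
So the strategy is to identify each side as a mutual information involving $X$, and then apply data processing along a suitable Markov chain.

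First, I would observe that since $X$ is independent of $Z$, one has $h(X+Z \mid X) = h(Z \mid X) = h(Z)$, and since $X$ is independent of the pair $(Y,Z)$ (and hence of $Y+Z$), one has $h(X+Y+Z \mid X) = h(Y+Z \mid X) = h(Y+Z)$. Therefore
$$I(X;\,X+Z) = h(X+Z) - h(Z), \qquad I(X;\,X+Y+Z) = h(X+Y+Z) - h(Y+Z),$$
so the inequality to be proved is exactly $I(X;\,X+Y+Z) \leq I(X;\,X+Z)$.

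Next, I would verify the Markov relation $X \to X+Z \to X+Y+Z$. Writing $X+Y+Z = (X+Z) + Y$, and using that $Y$ is independent of $(X,Z)$, it follows that conditional on $X+Z$, the variable $X+Y+Z$ depends only on the independent noise $Y$, hence is conditionally independent of $X$. The data processing inequality for differential mutual information then yields $I(X;\,X+Y+Z) \leq I(X;\,X+Z)$, which is the desired bound.

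The only delicate point is ensuring that all the entropies and conditional entropies involved are well-defined and finite, so that the rearrangements $h(X+Z) - h(Z) = I(X;X+Z)$ and $h(X+Y+Z) - h(Y+Z) = I(X;X+Y+Z)$ make sense without ambiguous $\infty - \infty$ cancellations; this is guaranteed by the hypothesis that all entropies are well-defined. Modulo this, the proof is essentially one line once the reformulation in terms of mutual information is in place, and I expect no substantial obstacle.
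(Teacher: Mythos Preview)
Your argument is correct. Note, however, that the paper does not actually prove Proposition~\ref{prop:submod}; it simply states the result and cites \cite{Mad08:itw} for the proof. Your data-processing argument---rewriting the inequality as $I(X;X+Y+Z)\le I(X;X+Z)$ and invoking the Markov chain $X\to X+Z\to X+Y+Z$---is precisely the standard proof, and is essentially the argument given in the cited reference. So there is nothing to compare: you have supplied exactly the proof the paper defers to.
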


In particular, let $X,Y,Z$ be uniformly distributed in arbitrary convex bodies
$A,B,D$, respectively. By Proposition~\ref{prop:vol-ent} with $m=3$, we then obtain that
$$
|A + B + D|^{1/n}\, |D|^{1/n} \leq\, 3\, |A + D|^{1/n}\,|B + D|^{1/n}.
$$

Let us comment on the relationship between Proposition~\ref{prop:M} and the reverse 
Brunn-Minkowski inequality from our 
point of view. The fact that the former implies the latter
is contained in V.~Milman's original papers \cite{Mil86, Mil88:2, Mil88:1} (cf. Pisier \cite[Corollary 7.3]{Pis89:book}) 
and is based on arguments involving metric entropy rather than measure-theoretic entropy. 

\begin{corollary}
The existence of $M$-ellipsoids for symmetric, convex bodies is equivalent to the reverse Brunn-Minkowski inequality.
\end{corollary}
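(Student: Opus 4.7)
The plan is to prove both implications of the equivalence separately, leveraging the relation \eqref{eq:symmsum} for symmetric convex bodies as the bridge between intersection-type and sum-type volume statements.

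For the forward direction (existence of $M$-ellipsoids $\Rightarrow$ reverse Brunn-Minkowski), given symmetric convex bodies $A, B \subset \R^n$, I would first apply Proposition~\ref{prop:M} to place them in $M$-position: after composing with volume-preserving linear maps, we may assume there are centered Euclidean balls $D_A, D_B$ with $|D_A| = |A|$, $|D_B| = |B|$, satisfying $|A \cap D_A|^{1/n} \geq c|A|^{1/n}$ and $|B \cap D_B|^{1/n} \geq c|B|^{1/n}$ for a universal constant $c > 0$. The right-hand inequality of \eqref{eq:symmsum} applied to the symmetric pair $(A, D_A)$ then yields $|A + D_A|^{1/n} \leq (4/c)|A|^{1/n}$, and analogously for $B$. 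A standard volumetric packing argument (a maximal subset of $A$ that is $1$-separated in the norm with unit ball $D_A$) converts this into a covering-number estimate $N(A, D_A) \leq 2^n |A + D_A|/|D_A| \leq K^n$ with $K$ depending only on $c$, and similarly $N(B, D_B) \leq K^n$. Since $D_A + D_B$ is again a centered Euclidean ball, of volume $(|A|^{1/n} + |B|^{1/n})^n$, taking the product cover of $A + B$ by the $K^{2n}$ translates of $D_A + D_B$ produces
\[
|A + B|^{1/n}\, \leq\, K^{2}\,\bigl(|A|^{1/n} + |B|^{1/n}\bigr),
\]
which is the reverse Brunn-Minkowski inequality for the pair $(A,B)$.

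For the reverse direction (reverse Brunn-Minkowski $\Rightarrow$ $M$-ellipsoids), given a symmetric convex body $A$, I would apply the reverse Brunn-Minkowski inequality \eqref{eq:reverseBM} to the pair $(A, D)$, where $D$ is the centered Euclidean ball of volume $|A|$. This yields volume-preserving linear maps $u_1, u_2$ with
\[
|u_1(A) + u_2(D)|^{1/n}\, \leq\, C\,\bigl(|A|^{1/n} + |D|^{1/n}\bigr)\, =\, 2C\,|A|^{1/n}.
\]
The image $\mathcal{E} := u_2(D)$ is a centered ellipsoid of volume $|A|$; pulling back by $u_1^{-1}$ (also volume preserving), the ellipsoid $\mathcal{E}' := u_1^{-1}(\mathcal{E})$ satisfies $|\mathcal{E}'| = |A|$ and $|A + \mathcal{E}'|^{1/n} \leq 2C|A|^{1/n}$. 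Since both $A$ and $\mathcal{E}'$ are symmetric, the left inequality in \eqref{eq:symmsum} applied to them gives
\[
|A \cap \mathcal{E}'|^{1/n}\, \geq\, \frac{|A|^{1/n}\, |\mathcal{E}'|^{1/n}}{|A + \mathcal{E}'|^{1/n}}\, \geq\, \frac{|A|^{1/n}}{2C},
\]
which exhibits $\mathcal{E}'$ as an $M$-ellipsoid for $A$ with constant $1/(2C)$.

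The main obstacle lies in the covering-number step of the forward direction. The inequality \eqref{eq:symmsum} cleanly converts the intersection-type $M$-position condition into the volumetric bound $|A + D_A|^{1/n} \leq C|A|^{1/n}$, but upgrading this to an actual covering of $A$ by $O(1)^n$ translates of $D_A$ requires the volumetric packing lemma — essentially the classical metric-entropy content of Milman's original argument. Once such covers for $A$ and $B$ are available, the product cover of $A + B$ by translates of the centered Euclidean ball $D_A + D_B$ is elementary, and I do not see a route through the submodularity of entropy in Proposition~\ref{prop:submod} alone that bypasses the covering-lemma ingredient.
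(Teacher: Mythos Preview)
Your reverse direction matches the paper's argument essentially verbatim: apply the reverse Brunn--Minkowski inequality to the pair $(A,D)$, then use the left inequality in \eqref{eq:symmsum} to convert the sum bound into an intersection bound.

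Your forward direction is correct but takes a genuinely different route from the paper's. You pass through covering numbers: from $|A+D_A|^{1/n}\leq (4/c)|A|^{1/n}$ you extract $N(A,D_A)\leq K^n$ by a packing argument, then cover $A+B$ by $K^{2n}$ translates of the ball $D_A+D_B$. This is the classical metric-entropy proof. The paper instead avoids covering numbers entirely, and does so precisely via the submodularity route you say you cannot see. Proposition~\ref{prop:submod} applied to $X,Y,Z$ uniform on $A,B,D$, together with Proposition~\ref{prop:vol-ent}, yields the three-body inequality \eqref{eq:vol-submod},
\[
|A+B|^{1/n}\,|D|^{1/n}\ \leq\ 2\,|A+D|^{1/n}\,|B+D|^{1/n},
\]
for \emph{any} convex bodies $A,B,D$. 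Combining this with the right inequality of \eqref{eq:symmsum} for the pairs $(A,D)$ and $(B,D)$ gives \eqref{eq:vol-submod2}, namely $|A+B|^{1/n}\leq 32/(|A\cap D|^{1/n}|B\cap D|^{1/n})$ when all three have volume one, and the $M$-position hypothesis finishes the job. So submodularity replaces the packing lemma: it directly controls $|A+B|$ by $|A+D|$ and $|B+D|$, with no intermediate covering step. Your argument is perfectly valid and is closer to Milman's original; the paper's point in this section is exactly to exhibit the alternative entropy-based mechanism.
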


\begin{proof}
Using the monotonicity of entropy, i.e., $h(X+Y+Z) \geq h(X+Y)$, we also
have another variant with a somewhat better constant
\be\label{eq:vol-submod}
|A + B|^{1/n}\, |D|^{1/n} \leq\, 2\, |A + D|^{1/n}\,|B + D|^{1/n}.
\en
If, furthermore, all these convex bodies are symmetric and have volume one, by 
\eqref{eq:symmsum} applied to the couples $(A,D)$ and $(B,D)$, we get from \eqref{eq:vol-submod} that
\be\label{eq:vol-submod2}
\frac{1}{|A \cap B|^{1/n}}\, \leq\,
|A + B|^{1/n}\, \leq\, \frac{32}{|A \cap D|^{1/n}\,|B \cap D|^{1/n}}.
\en

Therefore, if $A$ and $B$ are in $M$-position and have volume one, and
$D$ is the Euclidean ball of volume one, the right inequality in \eqref{eq:vol-submod2} 
together with the definition \eqref{eq:M-ell2} of $M$-position leads to the 
reverse Brunn-Minkowski inequality in the form \eqref{eq:BM-norm} with an identity linear operator,
$$
|A + B|^{1/n}\, \leq\, C.
$$
Note that the symmetry assumption in this conclusion can be removed by 
applying the above to the sets $A' = \frac{1}{|A-A|^{1/n}}\, (A-A)$ and
$B' = \frac{1}{|B-B|^{1/n}}\, (B-B)$ and making use of the Rogers-Shephard 
difference body inequality.

The converse statement that the reverse Brunn-Minkowski inequality implies
Proposition~\ref{prop:M} can be based on the left side of \eqref{eq:vol-submod2}. Indeed, let
$A$ be a symmetric convex body in $\R^n$ with volume one.
Our hypothesis includes, in particular, that for some linear volume preserving 
map $u:\R^n \rightarrow \R^n$, the set $\widetilde A = u(A)$ satisfies
$$
|\widetilde A + D|^{1/n}\, \leq\, C,
$$
where $D$ is the Euclidean ball of volume one, as before.
But then the left inequality in \eqref{eq:vol-submod2} being written for the couple 
$(\widetilde A,D)$ indicates that $\widetilde A$ is in $M$-position 
with constant $c = 1/C$.
\end{proof}

The following property of convex bodies in $M$-position
is well-known. (It can be obtained, for instance, by comparing the left and 
right sides of inequality \eqref{eq:vol-submod2}). If $A$ and $B$ are symmetric convex 
bodies in $M$-position of volume one, then
$$
|A \cap B|^{1/n} \geq c_1,
$$
where $c_1 = c^2/32$ and $c$ is Milman's constant in \eqref{eq:M-ell}. If we drop the 
volume assumption, the above may be applied to the sets 
$\frac{1}{|A|^{1/n}}\, A$ and $\frac{1}{|B|^{1/n}}\, B$, which leads to the following corollary.

\begin{corollary}\label{cor:M-sum}
Let $A$ and $B$ be symmetric convex bodies in $\R^n$ that are in $M$-position.
Then
$$
|A \cap B|^{1/n}\, \geq\, c_1\, \min\{|A|^{1/n}, |B|^{1/n}\}.
$$
\end{corollary}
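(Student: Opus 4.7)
My plan is to follow the hint given in the paragraph immediately preceding the corollary: first establish the volume-one case by directly reading the two ends of \eqref{eq:vol-submod2} against each other, and then reduce the general case by normalization.

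For the first step, I would assume $|A|=|B|=1$ and take $D$ to be the Euclidean unit-volume ball centered at the origin. The $M$-position hypothesis combined with definition \eqref{eq:M-ell2} yields $|A\cap D|^{1/n}\ge c$ and $|B\cap D|^{1/n}\ge c$, where $c$ is Milman's constant from Proposition~\ref{prop:M}. Chaining the two halves of \eqref{eq:vol-submod2} then gives
\[
\frac{1}{|A\cap B|^{1/n}}\,\le\,|A+B|^{1/n}\,\le\,\frac{32}{|A\cap D|^{1/n}\,|B\cap D|^{1/n}}\,\le\,\frac{32}{c^{2}},
\]
so $|A\cap B|^{1/n}\ge c^{2}/32=c_{1}$.

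For arbitrary volumes I would normalize by setting $A'=|A|^{-1/n}A$ and $B'=|B|^{-1/n}B$, both of volume one. Since the defining inequality \eqref{eq:M-ell2} is homogeneous of degree $n$ in $A$ (under $A\mapsto\lambda A$ the companion ball dilates by the same factor $\lambda$), $A'$ and $B'$ are again in $M$-position, so Step~1 yields $|A'\cap B'|^{1/n}\ge c_{1}$. Assuming without loss of generality that $|A|\le|B|$, write $t=|A|^{1/n}/|B|^{1/n}\le 1$ and observe that for every $y\in A'\cap B'$ one has $|A|^{1/n}y\in A$ directly, while $|A|^{1/n}y=t\,\bigl(|B|^{1/n}y\bigr)\in B$ because $|B|^{1/n}y\in B$ and $B$, being symmetric and convex, is star-shaped at the origin, so $tB\subseteq B$. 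Hence the linear map $y\mapsto|A|^{1/n}y$ embeds $A'\cap B'$ into $A\cap B$, giving $|A\cap B|^{1/n}\ge|A|^{1/n}\,|A'\cap B'|^{1/n}\ge c_{1}\min\{|A|^{1/n},|B|^{1/n}\}$.

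There is no serious obstacle: the two-sided volume bound \eqref{eq:vol-submod2} does all the heavy lifting, and the only subtlety is tracking how intersections transform when the two bodies are rescaled by different factors. That subtlety is handled by the star-shapedness of symmetric convex bodies at the origin, which is available precisely because of the centrally symmetric hypothesis (and in any case, as noted after \eqref{eq:M-ell}, the symmetry assumption could be removed at the cost of a Rogers--Shephard factor).
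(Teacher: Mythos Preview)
Your proof is correct and follows exactly the route the paper sketches: the volume-one case is obtained by reading off the two ends of \eqref{eq:vol-submod2} with $D$ the unit-volume ball, and the general case follows by applying this to the normalized bodies $|A|^{-1/n}A$ and $|B|^{-1/n}B$. Your star-shapedness argument simply makes explicit the passage from $|A'\cap B'|^{1/n}\ge c_1$ to the desired inequality, which the paper leaves to the reader.
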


Without the symmetry assumption, we still have a similar property
\be\label{eq:M-sum2}
\sup_x |(A-x) \cap B|^{1/n}\, \geq\, c_1\, \min\{|A|^{1/n}, |B|^{1/n}\}.
\en
Indeed, by \eqref{eq:diffbody2} and \eqref{eq:vol-submod}, the inequality \eqref{eq:vol-submod2} may be generalized as
$$
\frac{1}{\sup_x |(A-x) \cap B|^{1/n}}\ \leq\,
|A - B|^{1/n}\, \leq\, \frac{32}{|A \cap D|^{1/n}\,|B \cap D|^{1/n}},
$$
where $A,B,D$ are convex bodies of volume one and such that $D$ is symmetric.

It was mentioned in Section~\ref{sec:intro} that we provide a technology for 
going from entropy to volume estimates. Let us illustrate this in the context of
the submodularity phenomenon discussed here. Indeed, as described in
\cite[Theorem III]{Mad08:itw}, one consequence of submodularity is the following inequality.

\begin{lemma}\label{lem:fracsub}
Let $X$ and $Y_{1},\ldots, Y_{m}$ be independent $\R^{n}$-valued random vectors with finite entropies.
Let $\collS_k$ denote the collection of all subsets of $[m]=\{1,\ldots, m\}$ that are of cardinality $k$.
Then
$$
h\bigg(X+ \sum_{i\in[m]} Y_i \bigg) - h(X) \leq 
\frac{1}{\binom{m-1}{k-1}} \sum_{\setS\in\collS_k}  \bigg[ h\bigg( X+\sum_{i\in\setS} Y_{i}\bigg) - h(X) \bigg].
$$
\end{lemma}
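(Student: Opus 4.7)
The plan is to re-express the inequality as an abstract fractional-subadditivity statement for a set function, and then reduce it to Proposition~\ref{prop:submod}. Define
\[
f(\setS) \;=\; h\!\Bigl(X + \sum_{i \in \setS} Y_i\Bigr) - h(X), \qquad \setS \subseteq [m],
\]
so that $f(\emptyset)=0$, $f([m])$ is the left-hand side of the claim, and each term in the sum on the right is $f(\setS) + [h(X) - h(X)] = f(\setS)$. What we must prove is thus equivalent to
\[
f([m]) \;\leq\; \frac{1}{\binom{m-1}{k-1}} \sum_{\setS \in \collS_k} f(\setS).
\]

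The first key step is to show that $f$ is \emph{submodular} in the set-theoretic sense, i.e. $f(\setS\cup \setS') + f(\setS \cap \setS') \leq f(\setS) + f(\setS')$ for all $\setS, \setS' \subseteq [m]$. For this, I apply Proposition~\ref{prop:submod} to the three random vectors
\[
A = \sum_{i \in \setS \setminus \setS'} Y_i, \qquad
B = \sum_{i \in \setS' \setminus \setS} Y_i, \qquad
C = X + \sum_{i \in \setS \cap \setS'} Y_i.
\]
The index sets $\setS\setminus \setS'$, $\setS' \setminus \setS$, $\setS \cap \setS'$ are pairwise disjoint and $X$ is independent of all the $Y_i$, so $A, B, C$ are independent. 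Proposition~\ref{prop:submod} then gives $h(A+B+C) + h(C) \leq h(A+C) + h(B+C)$, and subtracting $2h(X)$ from both sides yields exactly the asserted submodularity of $f$.

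The second step is the combinatorial passage from submodularity to the stated fractional inequality. Observe that every $i \in [m]$ belongs to exactly $\binom{m-1}{k-1}$ members of $\collS_k$, so the uniform weights $\as = 1/\binom{m-1}{k-1}$ form a \emph{fractional partition} of $[m]$: $\sum_{\setS \ni i} \as = 1$ for each $i$. For any such fractional partition and any submodular $f$ with $f(\emptyset)=0$, one has the Shearer-type bound $f([m]) \leq \sum_{\setS} \as f(\setS)$; this is a purely combinatorial fact that can be proved by a short induction on $m$, repeatedly applying the pairwise submodular inequality to split off one element of $[m]$ at a time and re-balance the weights, using $f(\emptyset)=0$ to absorb the boundary terms. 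Inserting $\as = 1/\binom{m-1}{k-1}$ delivers the lemma.

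The main obstacle is Step~2, the combinatorial Shearer-type estimate. The tricky point is not submodularity itself but organizing the induction so that the diminishing marginal contributions of individual coordinates (a consequence of submodularity together with $f(\emptyset)=0$) are accounted for with the right $1/\binom{m-1}{k-1}$ weighting; the natural proof proceeds by fixing a uniformly random permutation of $[m]$, telescoping $f([m])$ along the resulting chain, and applying submodularity to bound each incremental marginal by an average over $k$-subsets. Step~1 is then the lightweight bridge that transports Proposition~\ref{prop:submod} from random vectors to the abstract set function $f$.
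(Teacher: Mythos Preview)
Your two-step argument---showing that $\setS\mapsto h\bigl(X+\sum_{i\in\setS}Y_i\bigr)-h(X)$ is a submodular set function via Proposition~\ref{prop:submod}, and then invoking the Shearer-type fractional bound for submodular $f$ with $f(\emptyset)=0$ under the uniform fractional partition by $k$-subsets---is correct and is exactly the route the paper indicates: the lemma is not proved in the text but is quoted from \cite[Theorem~III]{Mad08:itw}, where the same scheme is carried out. Your sketch of Step~2 via a random-permutation telescoping (or, equivalently, symmetrizing $f$ over permutations of $[m]$ to reduce to the elementary fact that a concave $g$ with $g(0)=0$ has $g(m)/m\le g(k)/k$) is the standard proof of that combinatorial statement.
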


Suppose $A$ and $B_1, \ldots, B_m$ are compact, convex sets in $\R^n$ with nonempty interior,
and that $X$ is uniformly distributed on $A$ while each $Y_i$ is uniformly distributed on $B_i$. 
Applying Proposition~\ref{prop:vol-ent}, we have that
\ben\begin{split}
\log\bigg[ \frac{|A+\sum_{i\in [m]} B_i|}{|A|} \bigg] -n\log (1+m)
&\leq h\bigg(X+ \sum_{i\in[m]} Y_i \bigg) - h(X) \\
&\leq \frac{1}{\binom{m-1}{k-1}} \sum_{\setS\in\collS_k}  \bigg[ h\bigg( X+\sum_{i\in\setS} Y_{i}\bigg) - h(X) \bigg] \\
&\leq \frac{1}{\binom{m-1}{k-1}} \sum_{\setS\in\collS_k}  \log\bigg[ \frac{|A+\sum_{i\in \setS} B_i|}{|A|} \bigg] .
\end{split}\een
Thus we obtain the following corollary.

\begin{corollary}\label{cor:plunnecke}
Let $\collS_k$ denote the collection of all subsets of $[m]=\{1,\ldots, m\}$ that are of cardinality $k$.
Let $A$ and $B_{1},\ldots, B_{m}$ be convex bodies in $\R^{n}$, and suppose
$$
\bigg|A+\sum_{i\in \setS} B_i \bigg|^{\nth} \leq c_{\setS} |A|^{\nth}
$$
for each $\setS\in\collS_k$, with given numbers $c_{\setS}$.
Then
$$
\bigg|A+\sum_{i\in [m]} B_i \bigg|^{\nth}
\leq (1+m) \bigg[\prod_{\setS\in\collS_k} c_{\setS}\bigg]^{\frac{1}{\binom{m-1}{k-1}}} |A|^{\nth} .
$$
\end{corollary}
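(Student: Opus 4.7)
The plan is to reduce the geometric statement to the entropic fractional submodularity in Lemma~\ref{lem:fracsub} via the volume--entropy dictionary of Section~\ref{sec:ent-vol}, following precisely the chain of inequalities displayed just before the corollary statement.

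Concretely, I would let $X, Y_1, \ldots, Y_m$ be mutually independent with $X$ uniform on $A$ and each $Y_i$ uniform on $B_i$. For any $\setS \subseteq [m]$, Corollary~\ref{cor:convo} ensures that $X + \sum_{i \in \setS} Y_i$ is a $\frac{1}{(|\setS|+1)n}$-concave random vector supported on $A + \sum_{i \in \setS} B_i$, so Proposition~\ref{prop:vol-ent} gives the two-sided estimate
$$
\log\bigg|A + \sum_{i \in \setS} B_i\bigg| - n \log(|\setS|+1) \,\leq\, h\bigg(X + \sum_{i \in \setS} Y_i\bigg) \,\leq\, \log\bigg|A + \sum_{i \in \setS} B_i\bigg|.
$$
Since $h(X) = \log|A|$ exactly, the hypothesis $|A+\sum_{i \in \setS} B_i|^{\nth} \leq c_{\setS}|A|^{\nth}$ combined with the upper estimate above translates into the entropic bound $h(X + \sum_{i\in \setS} Y_i) - h(X) \leq n \log c_{\setS}$ for every $\setS \in \collS_k$.

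Next I would apply Lemma~\ref{lem:fracsub} to $X, Y_1, \ldots, Y_m$ to obtain
$$
h\bigg(X + \sum_{i \in [m]} Y_i\bigg) - h(X) \,\leq\, \frac{n}{\binom{m-1}{k-1}} \sum_{\setS \in \collS_k} \log c_{\setS}.
$$
For the left-hand side I would invoke the \emph{lower} bound from Proposition~\ref{prop:vol-ent} applied to the full sum of the $m+1$ independent uniform vectors; this contributes exactly $\log|A+\sum_{i\in[m]}B_i|/|A| - n\log(1+m)$, which is where the factor $(1+m)$ in the conclusion enters. Dividing by $n$ and exponentiating then yields the advertised inequality.

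I do not foresee a substantive obstacle: Lemma~\ref{lem:fracsub} carries all the combinatorial weight, Proposition~\ref{prop:vol-ent} supplies the precise volume--entropy conversion on both sides, and the $(1+m)$ prefactor is traceable entirely to the gap between $h(X + \sum Y_i)$ and $\log|A + \sum B_i|$ that arises because the law of a sum of independent uniforms is not itself uniform on the Minkowski sum.
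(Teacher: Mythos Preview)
Your proposal is correct and follows exactly the paper's own argument: the chain of inequalities displayed immediately before the corollary statement \emph{is} the paper's proof, and you have reproduced it step for step (Proposition~\ref{prop:vol-ent} for the volume--entropy conversion on both ends, Lemma~\ref{lem:fracsub} for the fractional submodularity in the middle, with the $(1+m)$ factor coming from the lower bound applied to the full $(m+1)$-fold sum).
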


In particular, by choosing $k=1$, one already obtains an interesting inequality for volumes of Minkowski sums:
for convex bodies, if $|A+B_i|^{\nth} \leq c_{i} |A|^{\nth}$ for each $i=1,\ldots, m$, then
$$
\bigg|A+\sum_{i\in [m]} B_i \bigg|^{\nth}
\leq (1+m) \bigg[\prod_{i\in[m]} c_i\bigg] \, |A|^{\nth} .
$$

Inequalities of this type are well known for set cardinalities in the context of {\it finite} subsets of groups.
In fact, they are important inequalities in the field of 
additive combinatorics, where they are called Pl\"unnecke-Ruzsa inequalities (see, e.g., the book of
T.~Tao and V.~Vu \cite{TV06:book}). These were introduced by H.~Pl\"unnecke \cite{Plu70} and
generalized with a simpler proof by I.~Ruzsa \cite{Ruz89}; a more recent generalization is proved 
in \cite{GMR08}, and entropic versions are developed in \cite{MMT11}. 
For illustration, the form of Pl\"unnecke's inequality developed in  \cite{Ruz89} states that
if $A, B_1,\ldots ,B_k$ are finite sets in a commutative group and $|A| =m, |A + B_i| = \alpha_i m$, for $1 \leq i \leq k$,
then there exists an $X \subset A, X \neq \phi$ such that
\ben
|X+B_1+\ldots +B_k| \leq \alpha_1 \ldots \alpha_k |X| .
\een
Thus one may think of Corollary~\ref{cor:plunnecke} as providing continuous analogues of the Pl\"unnecke-Ruzsa inequalities
in the context of volumes of convex bodies in Euclidean spaces, where going from the discrete to the continuous
incurs the extra factor of $(1+m)$, but one does not need to bother with taking subsets of the set $A$.

Let us note that T.~Tao \cite{Tao08:1} has previously developed a continuous analogue
of Freiman's theorem, which is related to the Pl\"unnecke-Ruzsa inequalities. Specifically, 
\cite[Proposition~7.1]{Tao08:1} asserts that if
$A$ is an open bounded non-empty subset of $\R^n$ such that $|A+A| \leq K |A|$ 
for some $K \geq 2^n$, then
there exists an $\epsilon> 0$ and a set $P$ which is the sum of $O_K(1)$ arithmetic
progressions in $\R^n$ such that $A\subset P+B(0,\epsilon)$ and $|P+B(0,\epsilon)|\approx_K |A|$.
However, this kind of continuous analogue is different in nature from the one we propose above,
since it focuses on algebraic rather than convex structure. 
Another notable continuous analogue of Freiman's theorem is developed in
the more general context of locally compact, abelian groups by T.~Sanders \cite{San09}.


\section{The log-concave case}
\label{sec:lcpf}
\setcounter{equation}{0}

In the log-concave case Theorems~\ref{thm:repi}--\ref{thm:repi2}  are somewhat simpler due to the
property that the class of log-concave probability densities is closed
under the convolution operation.

Let us describe the argument, assuming that $X$ and $Y$ have log-concave
densities, say, $f$ and $g$, respectively. 
First consider the case, where both $f$ and $g$ are even functions
in the sense that $f(x)=f(-x)$ and $g(x)=g(-x)$.

\begin{proof}{\bf (of Theorem~\ref{thm:repi} in the symmetric log-concave case.)}
In this case, the essential supports 
$$
K_f = \{f(x) \geq c_0^n\, \|f\|\} \quad\text{and}\quad
K_g = \{g(x) \geq c_0^n\, \|g\|\} ,
$$
where $c_0 \in (0,1)$ is a universal constant, are symmetric convex sets.
By Corollary~\ref{cor:innerprod}
, one may bound the entropy power as follows:
\bee
H(X+Y) & \leq & e^2 \bigg(\int f(x) g(x)\,dx\bigg)^{\!-2/n} \\ 
       & \leq & 
e^2  c_0^{-4}\, \|f\|^{-2/n} \|g\|^{-2/n}\, |K_f \cap K_g|^{-2/n}.
\ene
Moreover, if both $K_f$ and $K_g$ are in $M$-position, which may be assumed, 
then we have by deploying Corollary~\ref{cor:M-sum} and relation \eqref{eq:5n} that
$$
|K_f \cap K_g|^{1/n} \geq c_1 \min\{|K_f|^{1/n},|K_g|^{1/n}\} \geq
\frac{c_1}{2}\, \min\left\{\|f\|^{-1/n},\|g\|^{-1/n}\right\}.
$$
Hence, with some numerical constant $C > 0$
$$
H(X+Y) \leq C\,\max\left\{\|f\|^{-2/n},\|g\|^{-2/n}\right\} \leq C\,
\max\{H(X),H(Y)\},
$$
where on the last step we made use of the general relation 
$H(X) \geq \|f\|^{-2/n}$. This proves Theorem~\ref{thm:repi} (and therefore Theorem~\ref{thm:repi2})
in the symmetric log-concave case.
\end{proof}

In the general non-symmetric case one may use the inequality \eqref{eq:M-sum2} for 
non-symmetric sets in $M$-position. There is also
another argument based on the following elementary observation.

\begin{lemma}\label{lem:renyi}
For any log-concave probability density $f$
on $\R^n$,
\be\label{eq:renyi}
2^{-n}\, \|f\|\, \leq\,
\int f(x)^2\,dx\, \leq\, \|f\|.
\en
\end{lemma}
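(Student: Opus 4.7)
The upper bound is immediate: since $f(x) \leq \|f\|$ almost everywhere, we have $f(x)^2 \leq \|f\|\,f(x)$, and integrating against Lebesgue measure gives $\int f^2\,dx \leq \|f\|\cdot\int f\,dx = \|f\|$, since $f$ is a probability density. This requires no convexity and works for any bounded density.

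For the lower bound, the key idea is to exploit log-concavity through the midpoint inequality. After translation (which affects neither $\|f\|$ nor $\int f^2\,dx$), I may assume the essential supremum is attained at the origin, so $f(0) = \|f\|$. Log-concavity then yields, for every $x$ in the support,
\begin{equation*}
f(x/2) \;=\; f\bigl(\tfrac12\,x + \tfrac12\,0\bigr) \;\geq\; f(x)^{1/2}\,f(0)^{1/2} \;=\; \|f\|^{1/2}\,f(x)^{1/2}.
\end{equation*}
Squaring gives the pointwise bound $f(x/2)^2 \geq \|f\|\,f(x)$.

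Integrating over $\R^n$ and changing variables $y = x/2$ (whose Jacobian contributes a factor $2^n$) converts this into
\begin{equation*}
2^n \int f(y)^2\,dy \;=\; \int f(x/2)^2\,dx \;\geq\; \|f\| \int f(x)\,dx \;=\; \|f\|,
\end{equation*}
which rearranges to the desired lower bound $\int f^2\,dx \geq 2^{-n}\,\|f\|$. The only mild subtlety is the justification that the essential supremum of a log-concave density is actually attained at some point (so the translation step is legitimate); this is standard, since a log-concave density is upper semi-continuous after modification on a null set, and finite by Borell's characterization, so the sup is realized in the closure of the support. No other obstacle arises, and the bound $2^{-n}$ is in fact sharp, achieved in the limit by one-sided exponential densities.
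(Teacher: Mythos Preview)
Your proof is correct and follows essentially the same route as the paper. Both arguments use the log-concavity inequality $f(tx+sy)\geq f(x)^t f(y)^s$ with $t=s=1/2$, combined with a change of variables; the only cosmetic difference is that the paper keeps $t$ generic and takes the supremum over $y$ at the end (thereby sidestepping the question of whether $\|f\|$ is attained), whereas you specialize to $t=1/2$ immediately and translate so that the maximum sits at the origin.
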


The right inequality is trivial and holds without any assumption on the
density. To derive the left inequality, write the definition of the 
log-concavity,
$$
f(tx + sy) \geq f(x)^t\, f(y)^s, \quad x,y \in \R^n, \ t,s> 0, t+s = 1.
$$
It may also be applied to $f^{1/t}$, so
$f(tx + sy)^{1/t} \geq f(x)\, f(y)^{s/t}$.
Integrating with respect to $x$ and using the assumption that $\int f = 1$, 
we get
$$
t^{-n} \int f(x)^{1/t}\,dx \geq f(y)^{s/t}.
$$
It remains to optimize over $y$'s, so that
$\int f(x)^{1/t}\,dx \geq t^{n}\, \|f\|^{s/t}$,
and then take the values $t = s = 1/2$.

\begin{proof}{\bf (of Theorem~\ref{thm:repi} in the general log-concave case.)} 
One may use symmetrization. Let $X$ be a random vector in $\R^n$ with a log-concave 
density $f$. Let $X'$ be an independent copy of $X$, thus
with density $\tilde f(x) = f(-x)$. Then the random vector
$X'' = X-X'$ has a symmetric log-concave distribution with density
$$
f * \tilde f(x) = \int f(x+y) f(y)\,dy,
$$
whose norm satisfies, by \eqref{eq:renyi},
\be\label{eq:renyi-applied}
\|f\|\, \geq \|f * \tilde f\| = f * \tilde f(0) \geq 2^{-n}\, \|f\|.
\en

Now, let's do the same symmetrization with another log-concave random vector 
$Y$ in $\R^n$ with density $g$, assuming that it is independent 
of $X$. Then we are in position to apply to $(X'',Y'')$ 
the symmetric part of Theorem~\ref{thm:repi}, which gives
\be\label{eq:symm-pf}
H(u_1(X'') + u_2(Y'')) \leq C\, (H(X'') + H(Y'')),
\en
for some linear volume preserving map $u_i:\R^n \rightarrow \R^n$
and some universal constant $C$. 

But since the entropy power may only increase when adding to a given random
vector an independent summand, the left side of \eqref{eq:symm-pf} is 
greater than or equal to $H(u_1(X) + u_2(Y))$. On the other hand,
by Corollary~\ref{cor:lc-maxnorm}
 and applying \eqref{eq:renyi-applied}, we have
$$
H(X'') \leq e^2\, \big\|f * \tilde f\big\|^{-2/n} \leq 
2e^2\, \|f\|^{-2/n} \leq 2e^2\,H(X).
$$
With a similar bound for the random vector $Y$, we arrive at
$$
H(u_1(X) + u_2(Y))  \leq 2e^2C\,(H(X) + H(Y)).
$$
\end{proof}


\section{Proof of Theorem~\ref{thm:repi}}
\label{sec:genpf}
\setcounter{equation}{0}

In order to involve in Theorem~\ref{thm:repi} more general convex measures,
we need to apply the more delicate Propositions~\ref{prop:cvx-maxnorm}, \ref{prop:cvx-aep} and \ref{prop:M-cvx}.
Moreover, since the previous argument based on the log-concavity of the
convolution of two log-concave densities has no extension to the
class of convex measures (with negative convexity parameter $\k$), 
we have to appeal to the submodularity property of the entropy functional.

Throughout this section let $Z$ denote a random vector in $\R^n$ uniformly
distributed in the Euclidean ball $D$ with center at the origin
and volume one. In particular, $h(Z)=0$, and by Proposition~\ref{prop:submod},
\be\label{eq:submod-again}
h(X+Y) \leq h(X+Z)\, + \,h(Y+Z),
\en
for all random vectors $X$ and $Y$ in $\R^n$ that are independent of each 
other and of $Z$ (provided that all entropy powers are well-defined).

Let $X$ and $Y$ have densities of the form \eqref{eq:cvx-def}. In view of the homogeneity 
of the inequality \eqref{eq:repi} of Theorem~\ref{thm:repi}, we may assume that $\|f\| \geq 1$ and 
$\|g\| \geq 1$. Then, by \eqref{eq:submod-again}, our task reduces to showing that both $h(X+Z)$ 
and $h(Y+Z)$ can be bounded from above by quantities, depending on $\beta_0$, 
only (under further assumption on $\beta_0$). This can be achieved by putting
the distributions of $X$ and $Y$ in $M$-position. 

Thus, what we need is:

\begin{lemma}\label{lem:ent-M}
Let $X$ be a random vector in $\R^n$ independent of $Z$
with density $f = V^{-\beta}$ such that $\|f\| \geq 1$, where $V$ is a convex 
function, and where $\beta$ is in the range 
\be\label{eq:range-b}
\beta \geq \max\{ 2n+1, \beta_0 n\} \ (\beta_0 > 2).
\en
Then for some linear volume preserving map $u:\R^n \rightarrow \R^n$, we have
$
H(u(X)+Z) \leq C_{\beta_0}
$
with constants depending on $\beta_0$, only.
\end{lemma}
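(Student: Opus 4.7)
The plan is to combine three earlier results: the existence of $M$-positions for convex measures (Proposition~\ref{prop:M-cvx}), the stability of $\k$-concavity under convolution (Proposition~\ref{prop:cvx-conv}), and the entropy--max-density bound (Proposition~\ref{prop:cvx-maxnorm}). The heuristic is that, after putting $X$ in $M$-position, the smoothed density $f \ast 1_D$ has a definite lower bound at the origin, and because the convolution retains enough convexity, this lower bound in turn controls its entropy from above.

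First I would invoke Proposition~\ref{prop:M-cvx} to produce a linear volume-preserving map $u$ such that the image $\tilde\mu = u(\mu)$ of $\mu = \mathrm{Law}(X)$ satisfies $\tilde\mu(D)^{1/n} \geq c_0$ with $c_0 = c_0(\beta_0) \in (0,1)$. Replacing $X$ by $u(X)$, assume this already holds for $\mu$ itself; its density $\tilde f$ still has the form $\tilde V^{-\beta}$ with $\|\tilde f\| \geq 1$. Next, I would read off the convexity of $X+Z$: since $Z$ is uniform on the convex body $D$ its law is $\tfrac{1}{n}$-concave, and since the law of $X$ is $\k'$-concave with $\k' = 1/(n-\beta) \in [-1,0)$ (using $\beta \geq 2n+1 \geq n+1$), Proposition~\ref{prop:cvx-conv} applies once $\k' + \tfrac{1}{n} > 0$, i.e.\ $\beta > 2n$. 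Under this condition $X+Z$ is $\k$-concave with $1/\k = (n-\beta) + n = 2n - \beta$, and the Borell characterization (Proposition~\ref{prop:cvx-meas}) identifies its density $q$ as being of the form $W^{-\beta'}$ with $\beta' = \beta - n$.

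The assumption $\beta \geq \max\{2n+1, \beta_0 n\}$ with $\beta_0 > 2$ is precisely tuned so that $\beta'$ satisfies the hypotheses of Proposition~\ref{prop:cvx-maxnorm}: indeed $\beta - n \geq n+1$ and $\beta - n \geq (\beta_0 - 1)n$ with $\beta_0 - 1 > 1$. Applying that proposition to $X+Z$ yields
$$
\frac{1}{n}\, h(X+Z) \,\leq\, C_{\beta_0 - 1} + \log \|q\|^{-1/n}.
$$
The $M$-position bound then delivers a matching lower bound on $\|q\|$: since $D$ is symmetric about the origin,
$$
\|q\| \,\geq\, q(0) \,=\, \int_D \tilde f(-z)\,dz \,=\, \tilde\mu(D) \,\geq\, c_0^{\,n},
$$
so $\|q\|^{-1/n} \leq 1/c_0$. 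Combining the two displays gives $H(u(X)+Z) \leq e^{2 C_{\beta_0 - 1}}/c_0^{\,2}$, a constant depending only on $\beta_0$.

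The main difficulty I anticipate is not conceptual but bookkeeping: one has to check that the parameter $\beta' = \beta - n$ of the convolved measure lands in the regime where Proposition~\ref{prop:cvx-maxnorm} is applicable, and this alignment is exactly what forces both numerical thresholds in the hypothesis --- $\beta \geq 2n+1$ so that $\beta' \geq n+1$, and $\beta_0 > 2$ so that $\beta'/n \geq \beta_0 - 1 > 1$. Once these arithmetic constraints are verified, the lemma reduces to a direct composition of the preceding propositions.
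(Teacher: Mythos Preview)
Your proposal is correct and follows essentially the same route as the paper: put $\mu$ in $M$-position via Proposition~\ref{prop:M-cvx}, use Proposition~\ref{prop:cvx-conv} to see that $X+Z$ has density $W^{-(\beta-n)}$, verify the parameter $\beta-n$ lands in the range of Proposition~\ref{prop:cvx-maxnorm}, and combine with the bound $\|q\| \geq q(0) = \tilde\mu(D) \geq c_0^n$. The only cosmetic difference is that the paper remarks the constant $c_0$ from Proposition~\ref{prop:M-cvx} can actually be taken universal here (since $\beta_0 > 2$ keeps us well away from the threshold $\beta_0 = 1$), whereas you allow it to depend on $\beta_0$; either way the final constant depends only on $\beta_0$.
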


\vskip5mm
\begin{proof}
By Proposition~\ref{prop:M-cvx}, for some affine volume preserving map 
$u:\R^n \rightarrow \R^n$, the distribution $\widetilde \mu$ of 
$\widetilde X = u(X)$ satisfies
$$
\widetilde \mu(D)^{1/n} \geq c_0
$$
with a numerical constant $c_0 > 0$ (which does not depend on $\beta_0$, 
since $\beta_0$ is well separated from 1). Let $\tilde f$ denote the density of 
$\widetilde X = u(X)$. Then the density $p$ of $S = \widetilde X + Z$,
given by
$$
p(x) = \int_D \tilde f(x-z)\,dz = \widetilde \mu(D - x),
$$
satisfies
\be\label{eq:8.interim}
\|p\| \geq p(0) \geq c_0^n.
\en
Hence, in order to bound the entropy power $H(S)$, it will
be sufficient to know the convexity parameter of the distribution of 
$S$. (Here is the place where the conditions \eqref{eq:range-b} arise).

As we know from the Borell characterization, the distribution 
$\widetilde \mu$ of $\widetilde X$ is $\k'$-concave with the convexity parameter
$$
\k' = - \frac{1}{\beta - n}.
$$
Also, recall that $Z$ has the uniform distribution in $D$ with the
parameter $\k'' = \frac{1}{n}$. In order to judge about convexity properties 
of the convolution $p = \widetilde f * g$, where $g = 1_D$ is the density of 
$Z$, one may apply Proposition~\ref{prop:cvx-conv}. Then we need to check the condition
$$
\k' + \k'' > 0,
$$
which in our case is equivalent to $\beta > 2n$. By \eqref{eq:range-b}, this requirement
is met, so $S$ has a $\k$-concave distribution with parameter 
$\k$ given by
$$
\frac{1}{\k} = \frac{1}{\k'} + \frac{1}{\k''} = - (\beta - 2n),
$$
that is, with $\k =  - \frac{1}{\beta - 2n}$. Equivalently, $S$ 
has a density of the form $p = W^{-\beta_{S}}$ for some convex 
function $W$ and with the $\beta$-parameter
$$
\beta_{S} = n - \frac{1}{\k} = n + (\beta - 2n) = \beta - n.
$$

We can now apply Proposition~\ref{prop:cvx-maxnorm}
 to the random vector $S$. Together 
with \eqref{eq:8.interim} it gives
$$
H(S) \leq C\, \|p\|^{-2/n} \leq C \cdot c_0^{-2},
$$
provided that $\beta_S \geq n+1$, $\beta_S \geq \beta_0'\, n$, $\beta_0' > 1$, 
and with constants depending on $\beta_0'$. With $\beta_0' = \beta_0 - 1$,
these conditions are equivalent to \eqref{eq:range-b}.

Lemma~\ref{lem:ent-M} and therefore Theorem~\ref{thm:repi} are proved.
\end{proof}

It would be interesting to explore the range of $\beta$,
such that the inequality of Theorem~\ref{thm:repi} holds true with
$\beta$-dependent constants. On the other hand, the following
statement (proved in \cite{BM11:nonunif}) is true:

\begin{proposition}\label{prop:counter} 
For any constant $C$, there is a convex
probability measure $\mu$ on the real line with the following property. If $X$ and $Y$ are independent random variables distributed according to $\mu$, then
$\min(H(X+Y),H(X-Y)) \geq CH(X)$.
\end{proposition}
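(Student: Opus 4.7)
The plan is to construct, for each $C$, a one-sided Pareto measure on $\R$ whose convexity parameter $\beta > 1$ is tuned close to the critical value $\beta = n = 1$. Take $\mu_\beta$ with density $f_\beta(x) = (\beta-1)(1+x)^{-\beta}$ on $[0,\infty)$. This is convex in the sense of Proposition~\ref{prop:cvx-meas}: $f_\beta = V^{-\beta}$ with $V(x) = (\beta-1)^{-1/\beta}(1+x)$ positive and (linearly) convex on the support. A direct integration gives $h(X) = -\log(\beta-1) + \beta/(\beta-1)$, so $h(X) \sim 1/(\beta-1)$ and $H(X) \sim e^{2/(\beta-1)}$ as $\beta \to 1^+$. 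At this end of the Borell hierarchy the parameter $\kappa = -1/(\beta-1)$ tends to $-\infty$, suggesting that the entropy should react most strongly to convolution.

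The heart of the argument is a conditional-entropy identity for the order statistics $M = \max(X,Y)$ and $W = \min(X,Y)$. Conditionally on $W = w$ the density of $M$ is $f_\beta(m)/\P(X > w) = (\beta-1)(1+w)^{\beta-1}(1+m)^{-\beta}$ on $[w,\infty)$, and a standard integration by parts gives $\E[\log(1+M) \mid W = w] = \log(1+w) + 1/(\beta-1)$. Substituting into the definition of entropy yields the clean identity $h(M \mid W = w) = h(X) + \log(1+w)$. Since $W$ has the half-Pareto density $f_W(w) = 2(\beta-1)(1+w)^{-(2\beta-1)}$ (a Pareto law with double tail index), an analogous calculation shows $\E \log(1+W) = 1/(2(\beta-1))$. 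Averaging over $W$ then gives
\[
h(M \mid W) \;=\; h(X) + \frac{1}{2(\beta-1)}.
\]

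Next observe that $X+Y = M+W$ and $|X-Y| = M-W$ are obtained from $M$ by the $W$-measurable translations $+W$ and $-W$, which preserve conditional differential entropy. Since conditioning reduces entropy, $h(X+Y) \geq h(X+Y \mid W) = h(M \mid W)$ and $h(|X-Y|) \geq h(M \mid W)$. The symmetric law of $X-Y$ folds with the usual correction $h(X-Y) = h(|X-Y|) + \log 2$, so combining,
\[
h(X \pm Y) - h(X) \;\geq\; \frac{1}{2(\beta-1)}, \qquad \text{hence} \qquad \min\bigl(H(X+Y),\, H(X-Y)\bigr) \;\geq\; H(X)\, e^{1/(\beta-1)}.
\]
Given any target $C>1$, choosing $\beta - 1 \leq 1/\log C$ produces the desired counterexample (and for $C \leq 1$ the standard entropy power inequality already suffices).

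The main obstacle is the careful conditional-entropy bookkeeping: verifying that the $W$-measurable translations $\pm W$ preserve conditional differential entropy, and that the $\log 2$ correction applies correctly when folding $X-Y$ to $|X-Y|$. Conceptually, the mechanism is that for convex measures at the edge of the hierarchy typical realizations of $X$ and $Y$ differ in magnitude by a factor of order $2^{1/(\beta-1)}$, so that $X \pm Y$ is essentially governed by $\max(|X|,|Y|)$ and carries substantially more entropy than $X$ alone --- too much to be absorbed by any fixed-constant reverse entropy power inequality, which explains why the range restriction on $\beta$ in Theorem~\ref{thm:repi} cannot be dispensed with.
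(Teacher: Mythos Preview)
Your argument is correct. The choice of the one–sided Pareto family $f_\beta(x)=(\beta-1)(1+x)^{-\beta}$ on $[0,\infty)$ is natural, and all of your computations check out: $h(X)=-\log(\beta-1)+\beta/(\beta-1)$; the conditional law of $M$ given $W=w$ is again Pareto (via the scaling $(1+M)/(1+w)$), yielding $h(M\mid W=w)=h(X)+\log(1+w)$; the minimum $W$ has the Pareto density with exponent $2\beta-1$, so $\E\log(1+W)=1/(2(\beta-1))$; and the identities $X+Y=M+W$, $|X-Y|=M-W$ combined with translation invariance of conditional differential entropy and $I(\,\cdot\,;W)\ge 0$ give
\[
h(X\pm Y)\ \ge\ h(M\mid W)\ =\ h(X)+\frac{1}{2(\beta-1)},
\]
hence $\min\bigl(H(X+Y),H(X-Y)\bigr)\ge e^{1/(\beta-1)}H(X)$, which can be made to exceed any prescribed $C$ by taking $\beta$ close enough to $1$. (A small remark: the entropy power inequality in fact already handles all $C\le 2$, not just $C\le 1$, since $H(X\pm Y)\ge 2H(X)$ for i.i.d.\ summands; this does not affect the argument.) You should also note, for completeness, that $h(X+Y)$ is finite: the convolution density decays like $z^{-\beta}$ at infinity, so $-\int p\log p$ converges for $\beta>1$.

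As for comparison with the paper: the present article does not actually prove Proposition~\ref{prop:counter}; it merely cites \cite{BM11:nonunif}. Your order-statistics/conditional-entropy argument is a clean and fully self-contained proof. The mechanism you identify --- that near the edge $\beta\downarrow 1$ of the Borell hierarchy the larger of $|X|,|Y|$ typically dominates $X\pm Y$ by a factor of order $e^{1/(\beta-1)}$ --- is exactly the phenomenon the proposition is meant to capture, and your conditional-entropy bookkeeping makes this quantitative with essentially no loss.
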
 

In other words, Theorem~\ref{thm:repi} does not hold with an absolute
constant to serve for the entire class of convex measures
(already in dimension one).


\section{Discussion}
\label{sec:disc}
\setcounter{equation}{0}

One may wonder how to find specific positions 
(that is, the linear maps $u_1$ and $u_2$) for the distributions of the 
random vectors $X$ and $Y$ in Theorem~\ref{thm:repi}. Natural candidates are the
so-called isotropic positions.

Let us recall the well-known and elementary fact that, in the class of 
all (absolutely continuous) probability distributions on $\R^n$ 
with a fixed covariance matrix, the entropy $h(X)$ is maximized when $X$ has 
a normal distribution. Equivalently, for any affine volume preserving map
$T$ of the space $\R^n$,
\be\label{eq:maxent}
\frac{1}{2\pi e}\, H(X) \leq \int \frac{|Tx|^2}{n}\, f(x)\, dx,
\en
where $f$ is density of $X$. If the right side of \eqref{eq:maxent} is minimized
for the identity map $T(x)=x$, then one says that the distribution of $X$ 
is isotropic or in isotropic position (cf. \cite{MP89}). This is equivalent to 
the property that $X$ has mean at the origin and, for any unit vector $\theta$,
$$
L_f^2 = \|f\|^{2/n} \int \left<x,\theta\right>^2\,f(x)\,dx,
$$
for some number $L_f > 0$, called the {\it isotropic constant} of $f$. If $X$ is
uniformly distributed in a convex body $K$, the number $L_f = L_K$ is
called the isotropic constant of $K$.

Thus, for any random vector $X$ in $\R^n$ with density $f$ regardless of 
whether its distribution is isotropic or not, \eqref{eq:maxent} may be rewritten as
\be\label{eq:isotr}
\frac{1}{2\pi e}\, H(X) \leq L_f^2\, \|f\|^{-2/n}.
\en
In view of the general bound $H(X) \geq \|f\|^{-2/n}$, the above estimate
implies, in particular, that $L_f^2 \geq \frac{1}{2\pi e}$, so the isotropic
constants are separated from zero.

Restricting ourselves to (isotropic) log-concave probability distributions, 
the question of whether the isotropic constants are bounded from above 
by a dimension-free constant is equivalent to the (still open) hyperplane 
problem raised by J. Bourgain in the mid 1980's. As was shown by K. Ball \cite{Bal88}, 
it does not matter whether this problem is stated for the class of (all) 
convex bodies or for the class of (all) log-concave distributions; 
see also \cite{Bob10} for an extension to the class of convex measures.
An affirmative solution of the hyperplane problem is known
for some subclasses of log-concave distributions. For example, $L_f$
is bounded by a universal constant, if the distribution of $X$ is log-concave
and symmetric about the coordinates axes.

Anyhow, the inequalities \eqref{eq:maxent}--\eqref{eq:isotr} suggest the following variant of the 
reverse Brunn-Minkowski inequality. Let $X$ and $Y$ be independent random 
vectors with log-concave densities $f$ and $g$, respectively. 
Applying \eqref{eq:maxent} to $X+Y$ with $Tx = x-x_0$, where $x_0 = \E\, (X+Y)$, 
we obtain that
\be\label{eq:variant}
\frac{1}{2\pi e}\, H(X+Y)\, \leq \,
\frac{1}{n} \int |x|^2\, f(x)\, dx + \frac{1}{n} \int |x|^2\, g(x) dx.
\en
Here the right side is sharpened, when the distributions of $X$ and $Y$
are put in the isotropic position, and then we arrive at
\be\label{eq:balltype1}
\frac{1}{2\pi e}\, H(\widetilde X + \widetilde Y)\, \leq \, 
L_f^2\, H(X) + L_g^2\, H(Y),
\en
where $\widetilde X = u_1(X)$, $\widetilde Y = u_2(Y)$, and where affine volume
preserving maps $u_i$'s are chosen so that both $\widetilde X$ and $\widetilde Y$ 
are isotropic. (Such maps are easily described in terms of the covariance
matrices of $X$ and $Y$).

In particular, if $X$ and $Y$ are uniformly distributed in convex bodies 
$A$ and $B$, respectively, the inequalities \eqref{eq:variant}--\eqref{eq:balltype1} together with the 
lower bound in \eqref{eq:volsum} yield 
$$
\frac{1}{8\pi e}\, |A+B|^{2/n}\, \leq \,
\frac{1}{n |A|} \int_A |x|^2\, dx + \frac{1}{n |B|} \int_B |x|^2\, dx .
$$
In particular, one obtains the following corollary.

\begin{corollary}\label{cor:ball}
Suppose $A$ and $B$ are convex bodies, and $\widetilde A = u_1(A)$ and $\widetilde B = u_2(B)$ are the
bodies after being put in isotropic position. Then
$$
\frac{1}{8\pi e}\, \big|\widetilde A + \widetilde B\big|^{2/n}\, \leq \,
L_A^2\, |A|^{2/n} + L_B^2\, |B|^{2/n} .
$$
\end{corollary}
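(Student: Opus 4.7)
The corollary amounts to a direct translation of the displayed inequality immediately preceding its statement into the language of isotropic constants, so my proof plan is essentially a definition chase with no genuine obstacle. The plan is to apply the inequality
$$
\frac{1}{8\pi e}\, |K + L|^{2/n} \,\leq\,
\frac{1}{n\,|K|} \int_{K} |x|^2\, dx + \frac{1}{n\,|L|} \int_{L} |x|^2\, dx,
$$
which is valid for any pair of convex bodies $K,L$ and was derived just above, to the choices $K = \widetilde A$ and $L = \widetilde B$. Since the maps $u_i$ are volume preserving, $|\widetilde A| = |A|$ and $|\widetilde B| = |B|$; and since the isotropic constant is invariant under volume-preserving affine transformations, $L_{\widetilde A} = L_A$ and $L_{\widetilde B} = L_B$.

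It then remains to rewrite the two second-moment integrals in terms of $L_A^2 |A|^{2/n}$ and $L_B^2 |B|^{2/n}$. For any isotropic convex body $K$, the uniform density satisfies $\|f_K\|^{2/n} = |K|^{-2/n}$, so the defining relation
$$
L_K^2 \,=\, \|f_K\|^{2/n} \int \langle x, \theta\rangle^2\, f_K(x)\, dx
\,=\, |K|^{-2/n}\cdot\frac{1}{|K|}\int_K \langle x, \theta\rangle^2\, dx
$$
holds for every unit vector $\theta$. Summing this identity over an orthonormal basis $\{\theta_1,\dots,\theta_n\}$ yields
$$
\frac{1}{n|K|}\int_K |x|^2\, dx \,=\, L_K^2\, |K|^{2/n},
$$
and substituting the cases $K = \widetilde A$ and $K = \widetilde B$ into the displayed inequality gives exactly the claimed bound.

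The only minor subtlety worth checking is that the inequality used as the starting point is valid without any centering hypothesis on $A$ or $B$. This holds since its derivation only relies on $\frac{1}{2\pi e}H(X+Y) \leq \frac{1}{n}\E|X+Y-\E(X+Y)|^2 = \frac{1}{n}(\Var X + \Var Y)$ combined with the pointwise bound $\Var X \leq \E|X|^2$ and the entropy-volume estimate $H(X+Y) \geq \frac{1}{4}|A+B|^{2/n}$ from \eqref{eq:volsum}; each of these continues to hold for uncentered bodies.
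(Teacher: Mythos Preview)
Your proof is correct and follows precisely the paper's own route: apply the second-moment inequality displayed just before the corollary to $\widetilde A$ and $\widetilde B$, then rewrite the normalized second moments via the isotropic-constant identity $\frac{1}{n|K|}\int_K |x|^2\,dx = L_K^2\,|K|^{2/n}$. Your explicit summation over an orthonormal basis and your remark on the centering subtlety just make transparent what the paper leaves to the phrase ``In particular.''
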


Therefore, if the isotropic constants $L_A$ and $L_B$ are known 
to be bounded by a constant, say $C_0$, then Corollary~\ref{cor:ball} 
provides a reverse Brunn-Minkowski 
inequality \eqref{eq:reverseBM} with $C = C_0 \sqrt{8\pi e}$.

A result such as Corollary~\ref{cor:ball} was first obtained, using a different argument, 
by K. Ball in his thesis \cite{Bal86:phd}.

\bibliographystyle{plain}

\end{document}